\newtheorem{thm}{Theorem}
\newtheorem{prop}[thm]{Proposition}
\theoremstyle{definition} 
\theoremstyle{remark} 
\newcommand{\dmo}{\DeclareMathOperator}
\newcommand{\R}{\mathbb{R}}
\newcommand{\Q}{\mathbb{Q}}
\newcommand{\co}{\mathbb{C}}\newcommand{\Z}{\mathbb{Z}}
\newcommand{\al}{\alpha}\newcommand{\ep}{\epsilon}\newcommand{\si}{\sigma}
\newcommand{\Om}{\Omega}\newcommand{\la}{\lambda}
\newcommand{\Ga}{\Gamma}
\newcommand{\wtil}{\widetilde}\newcommand{\Lam}{\Lambda}
\newcommand{\cd}{\cdots}\newcommand{\ld}{\ldots}
\newcommand{\sbs}{\subset}
\newcommand{\ra}{\rightarrow}
\newcommand{\bb}[1]{\mathbb{#1}}\newcommand{\un}[1]{\underline{#1}}\newcommand{\mf}{\mathfrak}
\newcommand{\fr}[2]{\frac{#1}{#2}}
\newcommand{\rt}{\rtimes}\newcommand{\ot}{\otimes}
\newcommand{\ti}{\times}
\dmo{\sgn}{sign}
\dmo{\we}{\wedge}
\dmo{\ind}{ind}\dmo{\Ind}{Ind}
\dmo{\bop}{\bigoplus}\dmo{\pic}{Pic}
\dmo{\coker}{coker}\dmo{\vol}{Vol}\dmo{\gal}{Gal}\dmo{\perm}{Perm}
\dmo{\tor}{Tor}\dmo{\ext}{Ext}\dmo{\Ext}{Ext}
\dmo{\aut}{aut}
\dmo{\Aut}{Aut}
\dmo{\inn}{Inn}\dmo{\var}{Var}
\dmo{\dep}{depth}
\dmo{\ad}{ad}\dmo{\curl}{curl}
\dmo{\hy}{\bb H}\dmo{\Sl}{SL}
\dmo{\SO}{SO}\dmo{\psl}{PSL}
\dmo{\isom}{Isom}\dmo{\Isom}{Isom}
\dmo{\conf}{Conf}
\dmo{\stab}{Stab}\dmo{\Jac}{Jac }
\dmo{\diam}{diam}\dmo{\fix}{Fixed}\dmo{\Fix}{Fix}
\dmo{\injR}{injRad}\dmo{\Ad}{Ad}
\dmo{\esv}{ess-vol}\dmo{\out}{Out}\dmo{\Out}{Out}
\dmo{\nil}{Nil}\dmo{\sol}{Sol}
\dmo{\Div}{div}
\dmo{\SU}{SU}
\dmo{\SP}{SP}
\dmo{\Sp}{Sp}
\dmo{\SL}{SL}
\dmo{\rk}{rk}
\dmo{\rank}{rank}
\dmo{\psp}{PSp}\dmo{\psu}{PSU}
\dmo{\PU}{PU}\dmo{\pgl}{PGL}
\dmo{\Mod}{Mod}\dmo{\range}{Range}
\dmo{\eu}{eu}\dmo{\mi}{mi}
\dmo{\Log}{Log}\dmo{\supp}{supp}
\dmo{\maps}{Maps}\dmo{\Gr}{Gr}
\dmo{\Pin}{Pin}
\dmo{\Spin}{Spin}\dmo{\Str}{Str}
\dmo{\Sq}{Sq}\dmo{\Symp}{Symp}
\dmo{\pd}{PD}\dmo{\PD}{PD}\dmo{\sig}{Sig}
\dmo{\Set}{Set}\dmo{\Top}{Top}
\dmo{\ev}{ev}\dmo{\St}{St}
\dmo{\Pt}{Pt}\dmo{\pt}{pt}
\dmo{\colim}{colim }\dmo{\Pl}{PL}
\dmo{\String}{String}\dmo{\smear}{smear}
\dmo{\dev}{dev}
\dmo{\met}{Met}\dmo{\contact}{Contact}
\dmo{\teich}{Teich}\dmo{\Teich}{Teich}\dmo{\qi}{QI}
\dmo{\der}{Der}
\dmo{\cl}{Cliff}\dmo{\Cl}{Cl}
\dmo{\Pf}{Pf}
\dmo{\ch}{ch}\dmo{\diag}{diag}
\dmo{\grad}{grad}\dmo{\Char}{char}
\dmo{\spec}{Spec}\dmo{\Arg}{Arg}
\dmo{\rad}{rad}\dmo{\im}{Im}
\dmo{\Hom}{Hom}\dmo{\End}{End}
\dmo{\tr}{tr}\dmo{\id}{Id}
\dmo{\gl}{GL}
\dmo{\sym}{Sym}\dmo{\Sym}{Sym}
\dmo{\com}{Comm}
\dmo{\Lk}{Lk}
\dmo{\CAT}{CAT}
\dmo{\Rep}{Rep}
\dmo{\Conf}{Conf}
\dmo{\PConf}{PConf}
\dmo{\Push}{Push}
\dmo{\Cont}{Cont}
\dmo{\sm}{\setminus}
\dmo{\vn}{\varnothing}
\dmo{\disk}{\mathbb D}
\dmo{\Trd}{Trd}\dmo{\Mat}{Mat}
\dmo{\Riem}{Riem}
\dmo{\Diffn}{\Diff_0}\dmo{\diff}{diff}
\dmo{\Diff}{Diff}\dmo{\homeo}{Homeo}
\dmo{\Homeo}{Homeo}\dmo{\Fr}{Fr}
\dmo{\rot}{rot}\dmo{\Emb}{Emb}
\dmo{\Ham}{Ham}\dmo{\Met}{Met}
\dmo{\Ein}{Ein}\dmo{\CP}{\co P}
\dmo{\Per}{Per}\dmo{\Ric}{Ric}
\newcommand{\C}{\mathbb C}\dmo{\Nrd}{Nrd}
\dmo{\Comp}{Comp}\dmo{\PSC}{PSC}
\dmo{\Cent}{Cent}\dmo{\Orb}{Orb}
\dmo{\aind}{a-ind}\dmo{\tind}{t-ind}
\dmo{\constant}{constant}
\dmo{\Td}{Td}
\dmo{\LMod}{LMod}
\dmo{\SMod}{SMod}
\dmo{\SDiff}{SDiff}
\dmo{\Br}{Br}
\dmo{\csch}{csch}
\dmo{\triv}{triv}
\dmo{\genus}{genus}
\dmo{\Homeq}{HomEq}
\dmo{\PP}{\mathbb{P}}
\dmo{\U}{U}
\dmo{\Gal}{Gal}
\dmo{\BDiff}{\wtil{\Diff}}
\dmo{\BAut}{\wtil{\Aut}}
\dmo{\Iso}{Iso}
\dmo{\codim}{codim}
\dmo{\II}{II}
\dmo{\I}{I}
\begin{document}

\title{Borel's stable range for the cohomology of arithmetic groups}

\author{Bena Tshishiku}
\address{Department of Mathematics, Harvard University, Cambridge, MA 02138} \email{tshishikub@gmail.com}

\subjclass[2000]{11F75, 22E46}

\date{\today}

\keywords{Arithmetic groups, cohomology, representation theory}

\begin{abstract} 
In this note, we remark on the range in Borel's theorem on the stable cohomology of the arithmetic groups $\Sp_{2n}(\Z)$ and $\SO_{n,n}(\Z)$. This improves the range stated in Borel's original papers, an improvement that was known to Borel. Our main task is a technical computation involving the Weyl group action on roots and weights. 
\end{abstract}

\maketitle


\section{Introduction}

Let $G$ be a semi-simple algebraic group defined over $\Q$, and let $\Ga$ be a finite-index subgroup of $G(\Z)$. For $V$ an algebraic representation of $G$, Borel \cite{borel_cohoarith,borel_cohoarith2} computed the cohomology $H^i(\Ga;V)$ in a \emph{stable range}, i.e.\ for $i\le N$ for some constant $N=N(G,V)$ that depends only on $G,V$.

In some cases, the constant $N(G,V)$ that appears in \cite[\S9]{borel_cohoarith} and \cite{borel_cohoarith2} can be improved. This is remarked by Borel in \cite[\S3.8]{borel_cohoarith2}. In this note, we supply the details of Borel's remark when $G$ is one of the algebraic groups 
\[\SO_{n,n}=\{g\in \Sl_{2n}(\C): g^t\left(\begin{array}{cc}0&I\\I&0\end{array}\right)g=\left(\begin{array}{cc}0&I\\I&0\end{array}\right)\}\]
or 
\[\Sp_{2n}=\{g\in\Sl_{2n}(\C): g^t\left(\begin{array}{cc}0&I\\-I&0\end{array}\right)g=\left(\begin{array}{cc}0&I\\-I&0\end{array}\right)\}.\]

\begin{thm}[Borel stability for $\SO_{n,n}(\Z)$]\label{thm:so-stability}Fix $n\ge4$. 
Let $V$ be an irreducible rational representation of $\SO_{n,n}$, and let $\Ga<\SO_{n,n}(\Z)$ be a finite-index subgroup. If $k\le n-2$, then $H^k(\Ga;V)$ vanishes when $V$ is nontrivial, and agrees with the stable cohomology of $\SO_{n,n}(\Z)$ when $V$ is the trivial representation. 
\end{thm}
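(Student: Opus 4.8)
The plan is to follow the strategy of Borel's vanishing theorem, tracking constants carefully. Recall that Borel's method computes $H^k(\Gamma;V)$ via the relative Lie algebra cohomology $H^k(\mathfrak{g},K;V\otimes C^\infty(\Gamma\backslash G))$, and the key input is that for $V$ nontrivial irreducible, the continuous cohomology $H^k_{ct}(G;V)$ vanishes in a range, while for $V$ trivial one recovers $H^k_{ct}(G;\mathbb{R})$, the stable cohomology of $G(\mathbb{Z})$. The precise statement one wants is: there is a constant $N$ such that for $k\le N$, $H^k(\Gamma;V)\cong H^k_{ct}(G;V)\otimes V^G$ (so it vanishes when $V$ is nontrivial). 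The vanishing of $H^k_{ct}(G;V)$ for nontrivial $V$ up to degree $N$ is governed by a purely Lie-theoretic quantity, and the improvement over Borel's originally stated range comes from computing this quantity optimally for $\mathfrak{so}_{n,n}$.

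Concretely, I would proceed as follows. First, reduce to the case $V$ nontrivial irreducible (the trivial case being exactly the assertion that the map to stable cohomology is an isomorphism, which is the content of Borel's theorem with the same constant). Second, invoke the Vogan–Zuckerman / Borel–Wallach description: $H^k_{ct}(G_\infty;V)\ne 0$ forces the existence of a $\theta$-stable parabolic $\mathfrak{q}$ with $V$ dual to the corresponding $A_{\mathfrak{q}}(\lambda)$-type data, and the smallest degree in which a nontrivial such contribution can occur is bounded below by $\dim(\mathfrak{u}\cap\mathfrak{p})$ minimized over proper $\theta$-stable parabolics, equivalently by a count involving noncompact roots. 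For the split group $\SO_{n,n}$ this minimum is realized by the "smallest" $\theta$-stable parabolic, and the bound becomes a combinatorial optimization over the root system $D_n$ and the Weyl group action on weights — this is exactly the "technical computation involving the Weyl group action on roots and weights" the abstract promises. Third, carry out that optimization to show the minimum is $\ge n-1$, hence $H^k_{ct}(G;V)=0$ for $k\le n-2$ and $V$ nontrivial; combined with the comparison theorem this gives the vanishing of $H^k(\Gamma;V)$ for $k\le n-2$.

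The main obstacle is the combinatorial heart of step three: one must show that for every proper $\theta$-stable parabolic subalgebra $\mathfrak{q}=\mathfrak{l}\oplus\mathfrak{u}$ of $\mathfrak{so}_{n,n}$, the quantity controlling the lowest nonvanishing degree (the number of noncompact positive roots made positive by the relevant $\mathfrak{u}$, or equivalently $\dim(\mathfrak{u}\cap\mathfrak{p})$) is at least $n-1$, and that this is sharp. Because $\theta$-stable parabolics of a split group correspond to Levi subgroups of the complexification together with a choice of "position", this amounts to checking, over all standard parabolics of $D_n$ and all Weyl-chamber positions, that a certain explicit sum of root heights or a certain cardinality is $\ge n-1$; the linchpin is identifying which parabolic minimizes it (one expects the maximal parabolic corresponding to deleting an end node of the $D_n$ Dynkin diagram) and verifying the inequality is tight there. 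Handling the two fork nodes of $D_n$ and the hypothesis $n\ge 4$ (so that the diagram is genuinely of type $D$, not $A_3$ or smaller) requires care, and it is here that the argument genuinely improves on Borel's coarser estimate.
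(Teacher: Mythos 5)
There is a genuine gap: your proposal replaces the quantity that actually controls the range in Borel's theorem with a different one, and the substitution does not work. The improved range is governed by Borel's comparison map $j^*\colon I^*_{G,V}\to H^*(\Gamma;V)$ being bijective for $k\le\min\{m(G(\R)),C(G,V)\}$, where $C(G,V)=\max\{q:\sigma(\rho+\lambda)>0\ \text{for all}\ \sigma\in W^q\}$ is defined via Kostant's theorem for the nilradical of a \emph{minimal parabolic} and the Weyl group elements of length $q$; the whole content of the paper is the combinatorial verification that $C(\SO_{n,n},V)\ge n-2$ by analyzing signed permutations of the coordinates of $\rho+\lambda$. Your quantity --- $\dim(\mathfrak{u}\cap\mathfrak{p})$ minimized over proper $\theta$-stable parabolics --- controls something else entirely, namely the lowest degree in which a nontrivial \emph{unitary} representation can have nonzero $(\mathfrak{g},K)$-cohomology with coefficients in $V$. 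For a non-uniform lattice such as a finite-index subgroup of $\SO_{n,n}(\Z)$, $H^*(\Gamma;V)$ is not computed by unitary representations alone: the Eisenstein part of the cohomology involves non-unitarily-induced representations, and it is exactly there that the dominance condition $\sigma(\rho+\lambda)>0$ (not a count of noncompact roots) enters. So even if you carried out your optimization over $\theta$-stable parabolics, you would not have bounded the range in which $H^k(\Gamma;V)$ agrees with the stable cohomology. Note also that $H^k_{ct}(G;V)=H^k(\mathfrak{g},K;V)$ vanishes in \emph{all} degrees for nontrivial irreducible finite-dimensional $V$ by Wigner's lemma; the issue is never whether the stable target vanishes, but in what range the comparison map is an isomorphism.

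Two further problems. First, your treatment of the trivial coefficient case --- ``the content of Borel's theorem with the same constant'' --- begs the question: Borel's originally stated constant for $\SO_{n,n}$ is substantially smaller than $n-2$, and improving it for trivial coefficients requires precisely the computation $C(G,\C)=n-2$ (a statement about $\sigma(\rho)$ for $\sigma$ of word length $\le n-2$), which your framework does not produce since $\theta$-stable-parabolic considerations say nothing about the trivial representation. Second, the combinatorial heart of your argument is only described, not executed; the claim that the minimum is $\ge n-1$ and realized at a maximal parabolic for an end node of the $D_n$ diagram is asserted without verification. The paper's actual argument works instead with the Weyl group $W=(\Z/2\Z)^{n-1}\rtimes S_n$ acting by even signed permutations on $\epsilon$-coordinates, shows that any $\sigma$ of length $\le n-2$ keeps $\sigma(\rho)$ dominant regular and keeps each fundamental weight $\sigma(\phi_k)$ dominant, and deduces $\sigma(\rho+\lambda)>0$; that is the computation you would need to carry out.
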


\begin{thm}[Borel stability for $\Sp_{2n}(\Z)$]\label{thm:sp-stability}
Fix $n\ge3$. Let $V$ be an irreducible rational representation of $\Sp_{2n}$, and let $\Ga<\Sp_{2n}(\Z)$ be a finite-index subgroup. If $k\le n-1$, then $H^k(\Ga;V)$ vanishes when $V$ is nontrivial, and agrees with the stable cohomology of $\Sp_{2n}(\Z)$ when $V$ is the trivial representation. 
\end{thm}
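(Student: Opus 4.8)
The plan is to run Borel's strategy and isolate, as the one genuinely new ingredient, a root-theoretic computation. Throughout, write $G=\Sp_{2n}(\R)$, let $K\cong U(n)$ be a maximal compact subgroup, $\mf g_\R=\mf k\op\mf p$ the Cartan decomposition, $X=G/K$ the Siegel upper half space, and $X_u$ its compact dual, so that $H^*(X_u;\R)=H^*(\mf g_\R,\mf k;\R)$ is precisely Borel's stable cohomology of $\Sp_{2n}(\Z)$. By Borel's comparison theorem \cite{borel_cohoarith,borel_cohoarith2}, valid in a range of degrees governed by the $\Q$-rank of $\Sp_{2n}$ (which is $n$), the natural map identifies $H^k(\Ga;V)$ with $H^k(\mf g_\R,K;\mathcal A(\Ga\backslash G)\ot V_\R)$, the relative Lie algebra cohomology of the space of automorphic forms. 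Decomposing the latter over the automorphic spectrum (Franke), the trivial representation contributes exactly $H^k(\mf g_\R,K;V_\R)$, equal to $0$ if $V$ is nontrivial and to $H^*(X_u;\R)$ if $V$ is trivial. So it remains to show that every other automorphic representation with nonvanishing $(\mf g_\R,K)$-cohomology with coefficients in $V_\R$ contributes only in degrees $k\ge n$.

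Second, I would feed in the Vogan--Zuckerman classification. If $H^*(\mf g_\R,K;\pi_\infty\ot V_\R)\ne0$ then $\pi_\infty$ has the infinitesimal character of $V_\R^*$ (Wigner's lemma) and hence is cohomologically induced: $\pi_\infty\cong A_{\mf q}(\la)$ for a $\theta$-stable parabolic $\mf q=\mf l\op\mf u\sbsneq\mf g_\C$ — proper, since $A_{\mf g}(\la)$ is finite-dimensional and therefore non-unitary unless $\la=0$ — with $\la$ a weight weakly dominant for $\mf u$ and matched to the highest weight of $V$. Its cohomology is supported in degrees $k\ge R_{\mf q}:=\dim_\C(\mf u\cap\mf p)$. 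Thus the theorem follows once we establish the inequality $R_{\mf q}\ge n$ for every proper $\theta$-stable parabolic $\mf q$ of $\mf{sp}_{2n}(\C)$ that is compatible with some such $\la$ — and, in the case $V$ trivial, where $\la=0$ imposes no constraint, for every proper $\theta$-stable parabolic.

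The main task is the following root-theoretic estimate. Relative to a $\theta$-stable compact Cartan $\mf t\subset\mf g_\C$, the $\theta$-stable parabolics are indexed by elements $x\in i\mf t_0$, with $\mf l=Z_{\mf g_\C}(x)$ and $\mf u$ the span of the root spaces positive on $x$; hence $R_{\mf q}$ is the number of noncompact roots $\al$ with $\al(x)>0$. For $\mf{sp}_{2n}(\C)$ the roots are $\pm e_i\pm e_j$ $(i<j)$ and $\pm2e_i$, the compact roots are $\pm(e_i-e_j)$, and the noncompact roots are $\pm(e_i+e_j)$ for $i\le j$. Normalizing $x=(\la_1\ge\cd\ge\la_n)$ by the Weyl group of $K$, one bounds $\#\{\,\al\text{ noncompact}:\al(x)>0\,\}$ from below by a case analysis on the signs of, and coincidences among, the $\la_i$; the minimum is $n$, attained at the corank-one parabolic with $x=(1,0,\dots,0)$, whose Levi is $\mf{gl}_1\times\mf{sp}_{2n-2}$. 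One then verifies that requiring $\mf q$ to support a nonzero highest weight $\la$ — weak $\mf u$-dominance of $\la$ — can only increase this count. The analogous computation for $\SO_{n,n}$ proceeds along the same lines in type $D_n$, after first sorting the roots $\pm e_i\pm e_j$ into compact and noncompact ones relative to a compact Cartan of the split real form; there the minimum of $R_{\mf q}$ is $n-1$, which yields the range $k\le n-2$. This bookkeeping — in particular getting the compact/noncompact split and the extremal cases exactly right, and confirming that the generic lower bound is not defeated by low-rank coincidences (whence the hypotheses $n\ge3$, resp.\ $n\ge4$) — is the main obstacle; everything else is an application of results of Borel, Franke, and Vogan--Zuckerman.
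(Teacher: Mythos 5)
Your strategy is genuinely different from the paper's, and it has a real gap at its central reduction. The paper deduces the theorem by quoting Borel's theorem with the improved constant $C(G,V)=\max\{q:\si(\rho+\la)>0 \text{ for all }\si\in W^q\}$ (as refined by Garland--Hsiang and Zucker) and then computing $C(G,V)=n-1$ by elementary signed-permutation combinatorics; no automorphic spectral decomposition is needed. In your version, the problem is that $\mathcal A(\Ga\backslash G)$ is not a direct sum of irreducible unitary $(\mf g,K)$-modules, so ``decomposing over the automorphic spectrum'' does not reduce $H^k(\mf g,K;\mathcal A(\Ga\backslash G)\ot V)$ to a sum of $H^k(\mf g,K;\pi_\infty\ot V)$ with $\pi_\infty$ unitary. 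Only the cuspidal (more generally, square-integrable) part decomposes that way; for that part your Vogan--Zuckerman argument is sound, and your computation that $\dim_\C(\mf u\cap\mf p)\ge n$ for every proper $\theta$-stable parabolic of $\mf{sp}_{2n}$ is correct (if $x=(\la_1\ge\cd\ge\la_n)\neq 0$ with, say, $\la_1>0$, then $2e_1$ together with, for each $j\ge 2$, one of $e_1+e_j$, $-(e_1+e_j)$, or $-2e_j$ is positive on $x$). But the non-cuspidal part of Franke's filtration has graded pieces that are parabolically induced from non-unitary twists of discrete-spectrum representations of Levi subgroups. Wigner's lemma still pins down the infinitesimal character there, but the statement ``cohomological $\Rightarrow$ $A_{\mf q}(\la)$, with cohomology vanishing below $R_{\mf q}$'' is a theorem about \emph{unitary} representations and does not apply to these induced modules. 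Controlling the degrees in which they contribute is precisely where the condition $\si(\rho+\la)>0$ for $\si\in W^q$ enters: by Kostant's theorem the weights of $H^q(\mf u;V)$ are the $\si(\rho+\la)-\rho$ with $\si\in W^q$, and this positivity is what pushes the Eisenstein contributions above degree $C(G,V)$. So your ``one genuinely new ingredient'' handles only the cuspidal part, and the hard non-cuspidal part --- which is what the paper's computation is actually about --- is unaddressed.

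Two secondary points. First, even if completed, your route leans on Franke's theorem and a full analysis of his filtration, which is far heavier machinery than the paper uses. Second, for the companion statement about $\SO_{n,n}$ with $n$ odd, the group $\SO(n,n)$ has no compact Cartan subalgebra (its maximal compact has rank $n-1<n$), so the parametrization of $\theta$-stable parabolics by $x\in i\mf t_0$ with $\mf t$ compact, and the clean compact/noncompact sorting of the roots $\pm e_i\pm e_j$, must be redone relative to a fundamental Cartan; that case is not ``along the same lines'' as written.
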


Theorem \ref{thm:sp-stability} is stated in \cite[Thm 3.2]{hain-infinitesimal} without proof. 

The cases $\SO_{2,2}$ and $\SO_{3,3}$ are exceptional because $\SO_{n,n}$ is isogenous to $\Sl_2\ti\Sl_2$ when $n=2$ and $\Sl_4$ when $n=3$. For $\SL_2(\Z)\ti\SL_2(\Z)$, the stable cohomology is trivial and there is no vanishing theorem. We remark further on the case of $\SO_{3,3}(\Z)$ in \S\ref{sec:so}. 

The bound in Theorem \ref{thm:so-stability} is nearly sharp. For example, when $n$ is odd, \cite{tshishiku-geocycles} proves that there is $\Ga<\SO_{n,n}(\Z)$ with $H^n(\Ga;\Q)\neq 0$, whereas if $i\le n-2$ is odd, then $H^{i}(\Ga;\Q)=0$ by Theorem \ref{thm:so-stability} and the determination of the stable cohomology of $\SO_{n,n}(\Z)$ \cite[\S11]{borel_cohoarith}.

This note originally appeared in the appendix of \cite{tshishiku-geocycles}. Theorems \ref{thm:so-stability} and \ref{thm:sp-stability} have been used by \cite{kupers-rw} in their study of the Torelli subgroup of diffeomorphisms of manifolds $\#_n(S^d\times S^d)$ when $d\ge3$. In this direction, we also mention that Theorems \ref{thm:so-stability} and \ref{thm:sp-stability} make the hypothesis on the degree of the representation $V$ in \cite[Prop.\ 3.9]{ebert-rw} unnecessary. 

\vspace{.1in} 
{\bf About the proof.} Theorems \ref{thm:so-stability} and \ref{thm:sp-stability} are deduced from the contents of \cite{borel_cohoarith} together with a representation-theoretic computation. 

We start by briefly summarizing Borel's approach to computing $H^*(\Ga; V)$ in a range; see also \cite{borel_cohoarith,borel_cohoarith2}. Fix a semi-simple algebraic group $G$ such that $G(\R)$ is of noncompact type, and let $X=G(\R)/K$ be the associated symmetric space. For a lattice $\Ga<G(\R)$, computing $H^*(\Ga;V)$ is equivalent to computing the homology of the complex $\Om^*(X;V)^\Ga$ of $V$-valued, $\Ga$-invariant differential forms on $X$. The subcomplex $I^*_{G,V}\subset\Om^*(X;V)^\Ga$ of $G(\R)$-invariant forms consists of closed forms, so there is a homomorphism 
\[j:I^*_{G,V}\ra H^*(\Ga;\R),\]
whose image is known as the \emph{stable cohomology}. The ring $I^*_{G,V}$ is easily computed: it is isomorphic to $H^*(X_u;V)$, where $X_u$ is the compact symmetric space dual to $X$, and it is also identified with Lie algebra cohomology $H^*(\mf g,K;V)$. In particular, if $V$ is irreducible and nontrivial, then $H^{*}(\mf g,K;V)$ is trivial \cite[Ch.\ II, Cor.\ 3.2]{borel-wallach}. Borel showed that $j^*$ is bijective in a range $i\le\min \{m(G(\R)),c(G,V)\}$. See \cite[Thm.\ 7.5]{borel_cohoarith} and \cite[Thm.\ 4.4]{borel_cohoarith2}.

To apply Borel's theorem, one wants to understand the constants $m(G(\R))$ and $c(G,V)$. According to \cite[\S4]{borel_cohoarith2}, $m(G(\R))\ge\rk_\R G(\R)-1$ for every $G$ that is almost simple over $\R$ (this includes $\SO_{n,n}$ and $\Sp_{2n}$, both of which have rank $n$). The constant $c(G,V)$ can be computed with some representation theory. 

\un{The constant $c(G,V)$}. Let $\mf g$ be the Lie algebra of $G(\C)$, and let $B\sbs G(\C)$ be a minimal parabolic (i.e.\ Borel) subgroup with Levi decomposition $B=U\rt A$. Let $\mf a$ and $\mf u$ be the corresponding Lie algebras. Here $\mf a\sbs\mf g$ is a maximal abelian (i.e.\ Cartan) subalgebra. The weights of $\mf a$ acting on $\mf g$ are called the \emph{roots} of $\mf g$, and the subset of weights of $\mf a$ acting on $\mf u$ are called \emph{positive}. Let $\rho$ be half the sum of the positive roots. A positive root is called \emph{simple} if it cannot be expressed as a nontrivial sum of positive roots. The simple positive roots $\{\al_k\}$ form a basis for $\mf a^*$. An element $\phi\in\mf a^*$ is called \emph{dominant} (resp.\ \emph{dominant regular}), denoted $\phi\ge0$ (resp. $\phi>0$), if $\phi=\sum c_k\>\al_k$ with $c_k\ge0$ (resp.\ $c_k>0$) for each $k$. 

Borel's constant $c(G,V)$ is the largest $q$ so that $\rho+\mu>0$ for every weight $\mu$ of $\Lam^q\mf u^*\ot V$, c.f.\ \cite[\S2 and Thm.\ 4.4]{borel_cohoarith}.

\un{A better constant $C(G,V)$.} According to \cite[Rmk.\ 3.8]{borel_cohoarith2} (see also \cite[Thm.\ 3.1]{garland-hsiang} and \cite[(3.20) and (4.57)]{zucker}), there is a better constant $C(G,V)\ge c(G,V)$ so $j^*$ bijective in degrees $i\le\{m(G(\R)),C(G,V)\}$. To define this constant, let $W$ be the Weyl group of $G(\C)$. For each $q\ge0$, let $W^q\sbs W$ be the subset of elements that send exactly $q$ positive roots to negative roots. Denoting the highest weight of $V$ by $\la$, define
\[C(G,V)=\max\{q: \si(\rho+\la)>0\text{ for all }\si\in W^q\}.\]

As Borel remarks, $C(G,V)$ can be interpreted as the largest $q$ for which $\rho+\mu>0$ for every weight $\mu$ of $H^q(\mf u;V)$. Since the Lie algebra cohomology $H^*(\mf u;V)$ is the homology of the complex $\Lam^*\mf u^*\ot V$, it follows that the weights of the former are a subset of the weights of the latter, so $c(G,V)\le C(G,V)$. 

In the remainder of this note, we compute the value of $C(G,V)$ when $G$ is $\Q$-split of type $C_n$ or $D_n$, i.e.\ $G(\Z)$ is commensurable with $\Sp_{2n}(\Z)$ or $\SO_{n,n}(\Z)$. 

\vspace{.1in} 
{\bf Acknowledgements.} The author would like to thank R.\ Hain for helpful email correspondence. 

\section{Computation for $\SO_{n,n}$} \label{sec:so}

The main goal of this section is to prove the following proposition. 

\begin{prop}\label{prop:so-constant}Fix $n\ge4$, and let $G=\SO_{n,n}$. Then $C(G,V)\ge n-2$ for each irreducible finite dimensional rational representation $V$ of $G$. 
\end{prop}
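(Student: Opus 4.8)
The goal is to show $C(G,V) \geq n-2$ for $G = \SO_{n,n}$ (type $D_n$), meaning I must verify that $\sigma(\rho+\lambda) > 0$ (dominant regular) for every Weyl group element $\sigma \in W^q$ with $q \leq n-2$. The plan is to work explicitly in the standard coordinate realization of the root system $D_n$: write $\mathfrak{a}^*$ with coordinates $(x_1,\dots,x_n)$ so that the positive roots are $e_i \pm e_j$ for $i<j$, and the Weyl group $W$ consists of permutations of the coordinates together with sign changes in an even number of coordinates. The highest weight $\lambda$ is dominant, so $\lambda = \sum \lambda_i e_i$ with $\lambda_1 \geq \lambda_2 \geq \cdots \geq \lambda_{n-1} \geq |\lambda_n|$, and $\rho = (n-1, n-2, \dots, 1, 0)$. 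Thus $\rho + \lambda$ has strictly decreasing nonnegative-ish entries (precisely: the first $n-1$ coordinates are strictly decreasing, and the last has absolute value strictly less than the $(n-1)$st). The condition "$\phi > 0$" (dominant regular) translates, in these coordinates, to $\phi$ having all coordinates distinct in absolute value with the ordering condition matching a dominant regular weight up to the $D_n$ sign ambiguity — more usefully, $\phi > 0$ iff $\phi$ is $W$-conjugate to something with strictly decreasing coordinates and positive last-but-one entry dominating the last; equivalently $\langle \phi, \alpha^\vee\rangle > 0$ for all simple coroots $\alpha$.

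The key structural input is the standard bijection (Kostant) between $W^q$ and length-$q$ elements of $W$, and the description of $\sigma^{-1}$ for $\sigma \in W^q$: such $\sigma$ is determined by which $q$ positive roots it sends negative, and for type $D_n$ one has a clean combinatorial model. Concretely, I would use that $\sigma(\rho+\lambda)$ is always a permutation-with-signs of the entries of $\mu := \rho+\lambda = (\mu_1 > \mu_2 > \cdots > \mu_{n-1} \geq |\mu_n|)$, and that $\sigma(\mu) > 0$ fails exactly when two resulting coordinates become equal in absolute value or the sign/order pattern is wrong — but since the $\mu_i$ are already distinct in absolute value (here I need $\mu_{n-1} > |\mu_n|$, which holds because $\lambda_{n-1} \geq |\lambda_n|$ forces $\mu_{n-1} = 1 + \lambda_{n-1} > |\lambda_n| = |\mu_n|$), the only way $\sigma(\mu)$ fails to be dominant regular is that the induced ordering of coordinates is wrong. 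So I must count: for how small a value of $q = \ell(\sigma)$ can $\sigma$ disturb the order/sign pattern of $\mu$ enough that $\sigma(\mu)$ is no longer dominant regular? The claim $C(G,V) \geq n-2$ amounts to: every $\sigma$ with $\ell(\sigma) \leq n-2$ keeps $\sigma(\mu)$ dominant regular.

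The heart of the argument is therefore a length estimate: I would show that if $\sigma(\mu)$ is not dominant regular, then $\ell(\sigma) \geq n-1$. The cleanest route: $\sigma(\mu)$ fails to be dominant regular iff some simple coroot $\alpha_k^\vee$ pairs non-positively with it, i.e. $\langle \sigma(\mu), \alpha_k^\vee \rangle \leq 0$, i.e. $\langle \mu, \sigma^{-1}\alpha_k^\vee\rangle \leq 0$; since $\mu$ is dominant regular, this forces $\sigma^{-1}\alpha_k$ to be a negative root. Writing $\beta = \sigma^{-1}\alpha_k$ (a negative root, with $\sigma\beta = \alpha_k$ positive, so $\beta$ is among the roots $\sigma^{-1}$ sends... — I'll track this carefully), the condition $\langle \mu, \beta^\vee \rangle \leq 0$ with $\mu$ having the coordinate form above forces $\beta$ (up to sign) to be a "long-range" root: either $\beta = \pm(e_i - e_j)$ or $\beta = \pm(e_i+e_j)$ with the indices spanning far enough that $\sigma$ is forced to invert many positive roots. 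Using the $D_n$ formula that $\ell(\sigma) = \#\{\text{positive roots made negative}\}$ and the explicit form of roots, I would show that making $\langle \mu, \sigma^{-1}\alpha_k^\vee \rangle \leq 0$ costs at least $n-1$ inversions — for instance, if $\sigma^{-1}$ must move a coordinate "across" $j$ others, that contributes $\geq j$ inversions from the $e_i - e_j$ roots alone, and the sign-change roots $e_i + e_j$ add more, pushing the total to $n-1$. I expect the main obstacle to be this bookkeeping: carefully enumerating, for each type of simple root $\alpha_k$ of $D_n$ (the "generic" $e_k - e_{k+1}$, and the two fork roots $e_{n-1}-e_n$ and $e_{n-1}+e_n$), exactly which $\sigma$ of small length could violate regularity, and confirming that the minimum over all violating $\sigma$ of $\ell(\sigma)$ is exactly $n-1$ (which also shows the bound $n-2$ is the best this method gives, consistent with the near-sharpness remark). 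A minor separate check is needed for when $\lambda_{n-1} = |\lambda_n|$ versus strict, and for the two types of fork roots in $D_n$, but the coordinate model makes these routine once the length estimate is in place.
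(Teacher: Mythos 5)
There is a genuine gap, and it occurs at the very first step: you have misidentified what ``dominant regular'' ($\phi>0$) means in Borel's criterion. In this setting $\phi>0$ means $\phi=\sum c_k\al_k$ with every $c_k>0$, i.e.\ $\phi$ lies in the interior of the cone spanned by the \emph{positive roots}; in $\ep_i$-coordinates this is the condition that the partial sums $y_1+\cdots+y_k$ (for $k\le n-2$) and $y_1+\cdots+y_{n-1}\pm y_n$ are all positive. It is \emph{not} the condition $\langle\phi,\al_k^\vee\rangle>0$ for all simple coroots, i.e.\ membership in the open dominant Weyl chamber; those are dual cones and genuinely different. Your proposed equivalence would make the proposition false outright: for any $\si\neq e$ there is a simple root $\al_k$ with $\si^{-1}\al_k<0$, hence $\langle\si\mu,\al_k^\vee\rangle=\langle\mu,\si^{-1}\al_k^\vee\rangle<0$ for $\mu=\rho+\la$ regular dominant, so under your reading every nonidentity $\si$ would violate positivity and one would get $C(G,V)=0$ for all $V$. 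Concretely, $\tau_1(\rho)=(n-2,n-1,n-3,\dots,1,0)$ is not in the dominant chamber, yet all its partial sums are positive, so it is dominant regular in the sense actually required; your criterion would wrongly reject it.

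Because of this, the ``heart of the argument'' you describe (tracking which $\si^{-1}\al_k$ become negative roots and counting inversions) is aimed at the wrong cone and cannot be repaired as stated. The correct task is to show that for $\ell(\si)\le n-2$ the partial sums of the signed permutation $\si(\rho+\la)$ stay positive; this is what the paper does, after first splitting $\rho+\la=\rho+\sum a_k\phi_k$ and proving separately that $\si(\rho)>0$ and $\si(\phi_k)\ge0$ for each fundamental weight $\phi_k$ (the latter reduces the problem to vectors of the form $(1,\dots,1,0,\dots,0)$, where one counts how many generators are needed to create and transport negative entries far enough left to kill a partial sum). Your overall instinct --- a length lower bound of $n-1$ for any $\si$ that destroys positivity --- is the right shape, but the positivity being destroyed is positivity of partial sums, not of pairings with simple coroots, and the combinatorial bookkeeping must be redone accordingly.
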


This is divided into two steps: we first show $C(G,\C)=n-2$ for the $\C$ the trivial representation (Proposition \ref{prop:so-constant-triv}), and then we show $C(G,V)\ge C(G,\C)$ for any other representation (Proposition \ref{prop:so-other}). 

To begin, we need the following information from \cite[pg.\ 256-258]{bourbaki}. Below $\ep_1,\ldots,\ep_n$ are the standard coordinate functionals on $\mf a\sbs\mf g$. 
\begin{itemize}
\item The simple roots are $\al_1=\ep_1-\ep_2$, $\ld$, $\al_{n-1}=\ep_{n-1}-\ep_n$, and $\al_n=\ep_{n-1}+\ep_n$.
\item The half-sum of positive roots is $\rho=\sum_{i=1}^nr_i\>\al_i$, where $r_i=\fr{(2n-i-1)i}{2}$ for $1\le i\le n-2$ and $r_{n-1}=r_n=\fr{n(n-1)}{4}$. 
\item The Weyl group $W=(\Z/2\Z)^{n-1}\rt S_n$ acts as the even signed permutation group of $\{\pm \ep_1,\ld,\pm\ep_n\}$, i.e.\ the symmetric group $S_n$ acts by permuting the indices of $\ep_1,\ld,\ep_n$, and $(\Z/2\Z)^{n-1}$ acts by an even number of sign changes.
\end{itemize} 

Let $\tau_i\in W$ be the reflection fixing the orthogonal complement of $\al_i$ (with respect to the inner product where the $\ep_i$ are othonormal). The $\tau_i$ generate $W$, and we write
\[S=\{\tau_1,\ldots,\tau_n\}.\] The action of $\tau_i$ on the roots sends $\al_i$ to $-\al_i$ and permutes the remaining positive roots. Thus $\tau_i\in W^1$, and it's not hard to show that $\si\in W^q$ if and only if the word length of $\si$ with respect to $S$ is $q$. See \cite[\S10.3]{humphreys-intro-lie} for details. 

In what follows we will work in the basis $(\ep_1,\ldots,\ep_n)$ instead of $(\al_1,\ldots,\al_n)$. We record how these two bases are related: if $\sum x_i\al_i=\sum y_i\ep_i$, then 
\begin{equation}\label{eqn:basis-change}
\begin{array}{rclcl}
x_k&=&y_1+\cd+y_k&k\le n-2\\[2mm]
x_{n-1}&=&\fr{1}{2}(y_1+\cd+y_{n-1}-y_n)\\[2mm]
x_n&=&\fr{1}{2}(y_1+\cd+y_{n-1}+y_n)
\end{array}
\end{equation}
For $i=1,\ld,n-1$, the reflection $\tau_i$ interchanges $\ep_i$ and $\ep_{i+1}$ (and acts trivially on the remaining $\ep_j$), while $\tau_n$ interchanges $\ep_{n-1}$ and $\ep_n$ and changes their signs. In $\ep_i$-coordinates,
\[\rho=(n-1,n-2,\ldots,2,1,0).\]

\begin{prop}\label{prop:so-constant-triv}
Fix $n\ge3$, and let $G=\SO_{n,n}$. Then $C(G,\C)=n-2$. 
\end{prop}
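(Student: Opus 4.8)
The plan is to compute $C(G,\C)=\max\{q:\si\rho>0\text{ for all }\si\in W^q\}$ directly, using the description of $W$ as the even signed permutation group and the fact (recorded above) that $\si\in W^q$ iff $\si$ has word length $q$ with respect to $S=\{\tau_1,\dots,\tau_n\}$. Concretely, $\rho=(n-1,n-2,\dots,1,0)$ in $\ep$-coordinates, and for $\si\in W$ the vector $\si\rho$ is obtained by permuting the entries $n-1,n-2,\dots,1,0$ and flipping an even number of signs. I would translate the condition $\si\rho>0$ (i.e.\ $\si\rho=\sum c_k\al_k$ with all $c_k>0$) into inequalities on the $\ep$-coordinates $(y_1,\dots,y_n)$ of $\si\rho$ via the change-of-basis formula \eqref{eqn:basis-change}: namely $y_1>0$, $y_1+\dots+y_k>0$ for $k\le n-2$, and $y_1+\dots+y_{n-1}\pm y_n>0$. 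In particular $\si\rho>0$ forces the partial sums $y_1+\dots+y_k$ to be positive for all $k\le n-1$ and also forces $y_1+\cdots+y_{n-1}>|y_n|$.

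The upper bound $C(G,\C)\le n-2$ should come from exhibiting a single element $\si\in W^{n-1}$ with $\si\rho\not>0$. A natural candidate is $\si=\tau_1\tau_2\cdots\tau_{n-1}$ (word length $n-1$, hence in $W^{n-1}$), which is the cyclic permutation sending $\rho=(n-1,n-2,\dots,1,0)$ to $(0,n-1,n-2,\dots,1)$; then $y_1=0$, so $\si\rho$ is not dominant regular, giving $C(G,\C)\le n-2$. (I would double-check the word-length claim and that this is the shortest way to move the entry $0$ into the first slot.)

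The lower bound $C(G,\C)\ge n-2$ is the substantive part: I must show that \emph{every} $\si\in W$ of word length $\le n-2$ satisfies $\si\rho>0$. Equivalently, writing $(y_1,\dots,y_n)=\si\rho$, I need: the word length of $\si$ being small forces the partial sums $y_1+\dots+y_k$ to stay positive. The key structural fact to extract is how word length controls the amount of ``disorder'' and sign-changing in $\si\rho$. Since $\tau_n$ is the only generator that changes signs, a word of length $q$ uses $\tau_n$ at most $q$ times, and the number of negative entries of $\si\rho$ is at most\dots{} here I expect to need a careful bookkeeping lemma, perhaps: if $\ell(\si)=q$ then $\si\rho$ has at most one negative entry when $q\le n-2$ and, more importantly, any negative entry $-c$ is ``small'' and sits far enough to the right that the partial sums $y_1+\cdots+y_k$ remain positive. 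One clean way to organize this: $\ell(\si)=q$ equals the number of positive roots sent to negative roots, i.e.\ the number of ``inversions'' of the signed permutation; bound the possible $(y_1,\dots,y_n)$ in terms of $q$ and verify the partial-sum inequalities by a direct but finite case analysis (the worst cases being $\si\rho$ a permutation of $(n-1,\dots,1,0)$ with one entry negated). I anticipate the main obstacle is precisely this combinatorial step: giving a clean, complete argument that word length $\le n-2$ forces all the partial-sum and $|y_n|$ inequalities, rather than checking finitely many small $n$ by hand. I would look for an inductive argument on word length — adding one generator $\tau_i$ changes $\si\rho$ by swapping two adjacent coordinates (for $i<n$) or swapping-and-negating the last two (for $i=n$), and track how a single such move can destroy at most one partial-sum inequality while the ``budget'' $n-2$ is large enough to keep $\rho$'s staircase shape $(n-1,n-2,\dots,1,0)$ safely dominant.
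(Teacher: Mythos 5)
Your upper bound is exactly the paper's: $\si=\tau_1\cdots\tau_{n-1}$ sends $\rho$ to $(0,n-1,\dots,2,1)$, whose $\al_1$-coefficient is $0$, so $C(G,\C)\le n-2$. That part is complete.

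The lower bound, however, is where the content lives, and you have left it as a plan rather than a proof: you write ``here I expect to need a careful bookkeeping lemma'' and ``I anticipate the main obstacle is precisely this combinatorial step.'' That is the step that needs to be done. Moreover, the one concrete lemma you float --- that $\ell(\si)\le n-2$ forces $\si\rho$ to have at most one negative entry --- is false: for $n\ge5$ the length-$3$ word $\si=\tau_n\tau_{n-2}\tau_{n-1}$ gives $\si\rho=(n-1,\dots,3,0,-1,-2)$, which has two negative entries (and is nonetheless dominant regular). The correct quantitative fact, which the paper isolates, is that negating the entries with values $j_1,\dots,j_m$ costs word length at least $j_1+\cdots+j_m$, because only $\tau_n$ changes signs and it acts only on the last two slots, so the entry $j$ (which starts $j$ places from the end) must first be walked to the right. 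From this, if $\ell(\si)\le n-2$ and the first coordinate of $\si\rho$ is still $n-1$, every partial sum is at least $(n-1)-(j_1+\cdots+j_m)\ge1>0$, with a parallel estimate handling the $\al_{n-1}$-coefficient $\tfrac12(y_1+\cdots+y_{n-1}-y_n)$. The paper then reduces the general $\si$ to this situation (and to the easy case where $\tau_{n-1},\tau_n$ are absent) by a splitting trick you do not mention: a word of length $\le n-2$ omits some $\tau_i$ with $1\le i\le n-1$, hence factors as $\si_1\si_2$ with $\si_1$ acting only on coordinates $1,\dots,i$ and $\si_2$ only on coordinates $i+1,\dots,n$, so the two special cases cover everything. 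Without the additivity bound $j_1+\cdots+j_m\le\ell(\si)$ and this splitting, your induction-on-word-length sketch does not close; as written the proposal has a genuine gap.
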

\begin{proof} First observe that the image of $\rho$ under $\si=\tau_1\cd\tau_{n-1}$ is not dominant regular. Indeed in $\ep_i$-coordinates, $\si(\rho)=(0,n-1,n-2,\ldots,2,1)$, which is not dominant regular since the coefficient on $\al_1$ is 0.  This implies $C(G,\C)\le n-2$. 

It remains to show $C(G,\C)\ge n-2$, i.e.\ if $\si\in W$ can be expressed as a word in $S$ of length $\ell\le n-2$, then $\si(\rho)$ is dominant regular. Recall above that the $\tau_i$ act on $\ep_i$-coordinates as signed permutations, so the coordinates of $\si(\rho)=(y_1,\ldots,y_n)$ are a signed permutation of the coordinates of $\rho=(n-1,\ldots,1,0)$. In order to show $\si(\rho)>0$, we need to show each of the sums $y_1+\cd+y_k$ is positive for $k\neq n-1$ and also that $y_1+\cd+y_{n-1}-y_n$ is positive. 

We first consider two special cases from which the general case follows. 

{\it Special case $1$.} Suppose that $\si$ is a word in $S\setminus\{\tau_{n-1},\tau_n\}$. In $\ep_i$-coordinates $\tau_1,\ldots,\tau_{n-2}$ act as permutations without sign changes that fix the last coordinate, so the coordinates of $\si(\rho)=(y_1,\ldots,y_{n-1},0)$, where $(y_1,\ld,y_{n-1})$ is a permutation of $(n-1,\ldots,1)$. In particular, $y_1,\ld,y_{n-1}$ are all positive, and it follows that $\si(\rho)$ is regular dominant. 

{\it Special case $2$.} Suppose that $\si$ is a word in $S\setminus\{\tau_1\}$. Then $\si(\rho)=(n-1,y_2,\ldots,y_n)$, where $(y_2,\ldots,y_n)$ is a signed permutation of $(n-2,\ldots,1,0)$.

Since $\tau_n$ is the only element of $S$ that changes any sign, in order for $j$ (the $(n-j)$-th coordinate of $\rho$) to appear with a negative sign in $\si(\rho)$, the length of $\si$ must be at least $j$ (this follows immediately from the $\tau_i$ action on the coordinates; note, for example, that the sign of $j$ in $\tau_n\tau_{n-2}\cd\tau_{n-j}(\rho)$ is negative). Similarly, in order for each of the coefficients $j_1,\ldots,j_m$ of $\rho$ to appear with negative signs in $\si(\rho)$, the length of $\si$ must be at least $j_1+\cdots+j_m$ (again this follows from the $\tau_i$ action; note that each $\tau_i$ only moves one coefficient to the right at a time). Let $j_1,\ldots,j_m$ be the coefficients of $\rho$ that become negative in $\si(\rho)$. Then $j_1+\cdots+j_m\le n-2$ because $\si$ has length $\le n-2$. Hence for $1\le i\le n$,
\[y_1+\cdots+y_i\ge (n-1)-(j_1+\cdots+j_m)\ge (n-1)-(n-2)>0.\]
It follows that the coefficient of $\al_i$ in $\si(\rho)$ is positive for each $i$, possibly with the exception of $i=n-1$ (c.f.\ (\ref{eqn:basis-change})). By the same reasoning, the coefficient of $\al_{n-1}$ is also positive: let $j_1,\ldots,j_m$ be the coefficients of $\rho$ that are negative in $\si(\rho)$, and suppose $y_n=j_{m+1}$. Moving $j_{m+1}$ to the $n$-th coordinate requires a word of length $j_{m+1}$ (e.g.\ $\tau_{n-1}\tau_{n-2}\cd\tau_{n-j}$), so as above $j_1+\cdots+j_{m+1}\le n-2$, and so, similar to the above, 
\[y_1+\cdots+y_{n-1}-y_n\ge (n-1)-(j_1+\cdots+j_{m+1})>0.\]

{\it General case.} Suppose $\si$ is any word in $\tau_1,\ld,\tau_n$ of length $\le n-2$. Then $\tau_i$ does not appear in $\si$ for some $1\le i\le n-1$, and we can write $\si=\si_1\si_2$, where $\si_1$ is a word in $\{\tau_1,\ldots,\tau_{i-1}\}$ and $\si_2$ is a word in $\{\tau_{i+1},\ld,\tau_n\}$. For $j\le i$, the coefficient of $\al_j$ in $\si(\rho)$ and $\si_1(\rho)$ agree, and for $j\ge i+1$, the coefficient of $\al_j$ in $\si(\rho)$ and $\si_2(\rho)$ agree, so $\si(\rho)$ is dominant regular by the previous two cases. 
\end{proof} 

\begin{prop}\label{prop:so-other}
Fix $n\ge4$, and let $G=\SO_{n,n}$. If $V$ is an irreducible representation, then $C(G,V)\ge C(G,\C)$. 
\end{prop}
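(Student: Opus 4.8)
\emph{Proof strategy.} The plan is to reduce Proposition~\ref{prop:so-other} to a statement about fundamental weights and then prove that statement by imitating the proof of Proposition~\ref{prop:so-constant-triv}.

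First I would reduce to fundamental weights. Fix $q\le n-2$ and $\si\in W^q$. By Proposition~\ref{prop:so-constant-triv}, since $q\le C(G,\C)=n-2$ we have $\si(\rho)>0$; write $\si(\rho)=\sum_l c_l\,\al_l$ with each $c_l>0$. Let $\la=\sum_k m_k\om_k$ (with $m_k\in\Z_{\ge0}$) be the highest weight of $V$, the $\om_k$ denoting the fundamental weights. Then
\[
\si(\rho+\la)=\si(\rho)+\sum_k m_k\,\si(\om_k),
\]
so if $\si(\om_k)\ge0$ for every $k$ --- i.e.\ each $\si(\om_k)$ is a non-negative combination of the $\al_l$ --- then $\si(\rho+\la)$ is a combination of the $\al_l$ whose $l$-th coefficient is at least $c_l>0$, hence $\si(\rho+\la)>0$. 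As $q\le n-2$ and $\si\in W^q$ were arbitrary, this yields $C(G,V)\ge n-2=C(G,\C)$. So it remains to prove: if $\si\in W$ has $S$-word length $\le n-2$, then $\si(\om)\ge0$ for every fundamental weight $\om$. By \cite[pg.\ 256-258]{bourbaki}, in $\ep$-coordinates $\om_k=(1,\ldots,1,0,\ldots,0)$ with $k$ ones for $1\le k\le n-2$, while $2\om_{n-1}=(1,\ldots,1,-1)$ and $2\om_n=(1,\ldots,1,1)$; since $\phi\ge0\Leftrightarrow t\phi\ge0$ for $t>0$, it is enough to treat the $\{0,\pm1\}$-valued vectors $v_1,\ldots,v_n$ with $v_k=\om_k$ for $k\le n-2$, $v_{n-1}=(1,\ldots,1,-1)$, and $v_n=(1,\ldots,1,1)$. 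Each $\si(v_k)$ is again $\{0,\pm1\}$-valued, and by (\ref{eqn:basis-change}) the condition $\si(v_k)\ge0$ is precisely that $\sum_{l=1}^{j}\si(v_k)_l\ge0$ for $1\le j\le n-2$, together with $\sum_{l=1}^{n-1}\si(v_k)_l-\si(v_k)_n\ge0$ and $\sum_{l=1}^{n-1}\si(v_k)_l+\si(v_k)_n\ge0$. (It is sometimes convenient that $\si(v_k)$ is the thresholding of $\si(\rho)$ at level $n-k$: its $l$-th coordinate is $0$ when $|\si(\rho)_l|<n-k$ and $\sgn(\si(\rho)_l)$ otherwise.)

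The second part of the plan is to establish $\si(v_k)\ge0$ for $S$-length $\le n-2$ by the same three-case division as in Proposition~\ref{prop:so-constant-triv}. A count of generators shows that either $\si$ uses neither $\tau_{n-1}$ nor $\tau_n$, or some $\tau_i$ with $1\le i\le n-2$ is absent from $\si$. In the \emph{first special case} ($\tau_{n-1},\tau_n$ absent) $\si$ permutes the first $n-1$ coordinates and fixes the last, so for $k\le n-2$ every coordinate of $\si(v_k)$ is $\ge0$, and for $k=n-1,n$ the last coordinate of $\si(v_k)$ equals that of $v_k$; either way (\ref{eqn:basis-change}) gives $\si(v_k)\ge0$ directly. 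In the \emph{second special case} ($\tau_1$ absent) $\si$ fixes the first coordinate, which equals $1$ for every $v_k$, and one controls the negative entries of $\si(v_k)$ by the word length, as discussed in the final paragraph; this is the place where the hypothesis $n\ge4$ enters (e.g.\ in checking $\tau_n(v_n)\ge0$). In the \emph{general case} ($\tau_i$ absent with $1\le i\le n-2$) write $\si=\si_1\si_2$ with $\si_1\in\langle\tau_1,\ldots,\tau_{i-1}\rangle$ and $\si_2\in\langle\tau_{i+1},\ldots,\tau_n\rangle$: these commute (they move disjoint coordinates), $\si_1$ is covered by the first special case and $\si_2$ by the second, and --- exactly as in Proposition~\ref{prop:so-constant-triv} --- the coefficient of $\al_j$ in $\si(v_k)$ agrees with that in $\si_1(v_k)$ for $j\le i$ and with that in $\si_2(v_k)$ for $j\ge i+1$; hence all coefficients are $\ge0$ and $\si(v_k)\ge0$.

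The main obstacle is the second special case. There I would argue as in Proposition~\ref{prop:so-constant-triv}: each letter $\tau_l$ moves exactly one coordinate one slot to the right (and one coordinate one slot to the left), so the total rightward displacement equals the word length, and a coordinate whose sign is reversed must at some moment occupy the last slot, hence travels rightward at least its initial distance from that slot. Since the nonzero entries of $v_k$ occupy slots $1,\ldots,k$ (the first of which is fixed in this case), reversing the signs of $m$ of them forces $m(n-k)+\binom{m}{2}\le n-2$; combined with the dual estimate that a sign-reversed coordinate ending at an early slot must also travel far to the left, one gets that the negative entries of $\si(v_k)$ are too few, and too far to the right, to make any partial sum or boundary sum in (\ref{eqn:basis-change}) negative. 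Carrying out this simultaneous bookkeeping --- the number of reversed coordinates, how far to the left they travel, and the two boundary conditions attached to the last coordinates of the $D_n$ system --- is the genuinely delicate point; everything else is a formal reduction to the trivial-coefficient case already handled in Proposition~\ref{prop:so-constant-triv}.
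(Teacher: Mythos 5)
Your reduction is exactly the paper's: write the highest weight as $\la=\sum_k a_k\phi_k$ with $a_k\ge0$, invoke $\si(\rho)>0$ from Proposition~\ref{prop:so-constant-triv}, and reduce to showing that $\si(\phi_k)\ge0$ for every fundamental weight whenever $\si$ has $S$-length $\le n-2$. Your observation about where $n\ge4$ enters is a good sanity check: $\tau_n(2\phi_n)=(1,\ld,1,-1,-1)$ has $\al_n$-coefficient $\frac{n-4}{2}$, which is exactly the obstruction at $n=3$. The three-case division on which generator is absent is a legitimate organizational variation --- the paper does not use it for this proposition, but instead runs a counting argument directly for an arbitrary $\si$ of length $\le n-2$ --- yet it buys essentially nothing: the first special case and the general case are formal, and the second special case still carries the entire content of the proposition.

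And that content is missing: what you label ``the genuinely delicate point'' and decline to carry out is precisely the proof. Concretely, suppose $\si(\phi_k)$ ($k\le n-2$) fails to be dominant, and let $i$ be minimal with $y_1+\cd+y_i<0$; after arranging the summands in decreasing order one has $\ell$ entries equal to $+1$ and $\ell+1$ equal to $-1$ among $y_1,\ld,y_i$, with $\ell<k/2$. Two displacement bounds are needed: creating $\ell+1$ negative entries costs length at least $(n-k)+\cd+(n-k+\ell)=(\ell+1)(n-k)+\binom{\ell+1}{2}$ (your $m(n-k)+\binom{m}{2}$ with $m=\ell+1$ --- you have this one), and in addition the $k-2\ell-1$ surviving positive entries must be pushed rightward past the $\ell+1$ negative ones, costing at least another $(k-2\ell-1)(\ell+1)$ (your ``dual estimate,'' which you only gesture at). Summing gives length at least $n(\ell+1)-\frac{3\ell^2+5\ell+2}{2}$; imposing that this is $\le n-2$ yields $n\le\frac{3\ell+5}{2}$, and $\ell<k/2\le(n-2)/2$ then forces $n<4$, the desired contradiction. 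This combination and the final arithmetic are where the hypothesis $n\ge4$ is actually consumed and where the argument genuinely fails for $n=3$; without them the proposal asserts the conclusion rather than proving it. (The weights $\phi_{n-1},\phi_n$ then require the analogous computation against the $\al_{n-1}$- and $\al_n$-conditions of (\ref{eqn:basis-change}); the paper also leaves these to the reader, but only after the mechanism is fully established for $k\le n-2$.)
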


\begin{proof}
Let $\la$ be the highest weight of $V$. According to \cite[\S19.2]{fh}, $\la$ can be expressed as an integral linear combination $\la=\sum_{k=1}^n a_k\phi_k$, where $a_k\ge0$ and 
\begin{equation}\label{eqn:highest-weight-so}\phi_k=\left\{\begin{array}{lllll}\ep_1+\cd+\ep_k&k\le n-2\\[2mm]
(\ep_1+\cd+\ep_{n-1}-\ep_n)/2&k=n-1\\[2mm]
(\ep_1+\cd+\ep_n)/2&k=n.
\end{array}\right.
\end{equation}
If $\si\in W$, then $\si(\rho+\la)=\si(\rho)+\sum_k a_k\>\si(\phi_k)$. We proceed by studying when $\si(\phi_k)$ is dominant. To show $C(G,V)\ge C(G,\C)=n-2$, it suffices to show that if $\si\in W^q$ for $q\le n-2$, then $\si(\phi_k)\ge0$ for each $1\le k\le n$. Then for any highest weight $\la=\sum a_k\phi_k$, we conclude that $\si(\rho+\la)=\si(\rho)+\sum a_k\>\si(\phi_k)$ is dominant regular because $\si(\phi_k)\ge0$ and $\si(\rho)>0$ (Proposition \ref{prop:so-constant-triv}). 

We consider separately cases $1\le k\le n-2$ and $k=n-1,n$. In either case the argument is similar to the corresponding step in the proof of Proposition \ref{prop:so-constant-triv}. 

Fix $1\le k\le n-2$ and write $\phi=\phi_k$. In $\ep_i$-coordinates, $\phi=(1,\ldots,1,0,\ldots,0)$. Next we bound from below the minimum word length of $\si$ needed for $\si(\phi)<0$, and we will find that there is no $\si$ of length $\le n-2$. 

First observe that the only way to act by elements of $S$ to make a coefficient of $\phi$ negative is to move that coefficient to the right (using a word like $\tau_{n-2}\cd\tau_i$), and then apply $\tau_n$. Therefore, fixing $\ell<k/2$, any word $\si$ such that $\si(\phi)$ has $\ell+1$ negative coordinates has length at least
\begin{equation}\label{eqn:so-word-bound}
(n-k)+\cdots+(n-k+\ell)=n(\ell+1)-\left[\fr{(k+1)k}{2}-\fr{(k-\ell)(k-\ell-1)}{2}
\right].\end{equation}

\begin{figure}
{\color{blue}$\phi=(1,\ldots,1,\underbrace{1,\ldots,1}_{\ell+1},\underbrace{0,\ldots,0}_{n-k})$}
\caption{To make $\ell+1$ positive coefficients of $\phi$ negative requires a word whose length is at least the quantity in (\ref{eqn:so-word-bound}).}
\label{fig:1}
\end{figure}
See Figure \ref{fig:1}. 

After creating $\ell+1$ negative coefficients, to make a non-dominant vector, one needs to move sufficiently many positive entries to the right, passed the negative entries. Since we start with $k=\ell+(k-2\ell-1)+(\ell+1)$ positive entries, we must move $(k-2\ell-1)$ positive entries passed the $(\ell+1)$ negative entries. This requires a word of length at least 
\begin{equation}\label{eqn:so-word-bound2}(k-2\ell-1)(\ell+1).\end{equation}

\begin{figure}
{\color{blue}$(\underbrace{1,\ldots,1}_\ell,\underbrace{1,\ldots,1}_{k-2\ell-1},\underbrace{0,\ldots,0}_{n-k},\underbrace{-1,\ldots,-1}_{\ell+1})$}
\caption{We can make this vector non-dominant by moving $k-2\ell-1$ positive entries past $\ell+1$ negative entries. This requires a word whose length is at least the quantity in (\ref{eqn:so-word-bound2}).}
\label{fig:2}
\end{figure}
See Figure \ref{fig:2}. 

Now we conclude. Suppose for a contradiction that $\si$ has length $\le n-2$ and that $\si(\phi)<0$. Write $\si(\phi)=(y_1,\ldots,y_n)$, and let $i$ be the smallest index so that the coefficient of $\al_i$ in $\si(\phi)$ is negative. If $i\neq n-1$, then this means $y_1+\cdots+y_i<0$. We will assume $i\neq n-1$; the case $i=n-1$ is similar (c.f.\ the proof of Proposition \ref{prop:so-constant-triv}). The terms in this sum $y_1+\cd+y_i$ are all $+1, 0, -1$. By replacing $\si$ with a shorter word, we can assume that the summands occur in decreasing order $1+\cd+1+0+\cd+0+-1+\cd+-1$ (this follows from the description of the $\tau_i$ action and the fact that the coefficients of $\phi$ are decreasing). By minimality of our choice of $i$, if there are $\ell$ positive terms in the sum, then there are $\ell+1$ negative terms. Note then that $2\ell+1\le k$, so $\ell<k/2$. 

Combining (\ref{eqn:so-word-bound}) and (\ref{eqn:so-word-bound2}), if the leading coefficients of $\si(\phi)$ are $(1,\ld,1,0,\ld,0,-1,\ld,-1,\ld)$, then the length of $\si$ is at least 
\[n(\ell+1)-\left[\fr{(k+1)k}{2}-\fr{(k-\ell)(k-\ell-1)}{2}\right]+(k-2\ell-1)(\ell+1)=n(\ell+1)-\fr{3\ell^2+5\ell+2}{2}.\]
Since we're assuming $\si$ has length $\le n-2$, we must have $n(\ell+1)-\fr{3\ell^2+5\ell+2}{2}\le n-2$. This inequality implies that 
\[n\le \fr{3\ell+5}{2}.\]
Since $\ell<k/2\le (n-2)/2$, this implies that $n<4$. This is contrary to our hypothesis, so we conclude that there does not exist $\si$ of length $\le n-2$ so that $\si(\phi)<0$. 

The same analysis can be applied to $\phi_{n-1}$ and $\phi_n$. The details are unilluminating and can easily be supplied by the interested reader, so we omit them here. 
\end{proof}

Proposition \ref{prop:so-other} is false for $n=3$. In this case, $C(G,\C)=1$, but there are $V$ with $C(G,V)=0$. For example, take $V$ the irreducible representation with highest weight $m\phi_2$. Observe that, in $\ep_i$-coordinates, 
\[\tau_2(\rho+m\phi_2)=\left(2+\fr{m}{2}, -\fr{m}{2},1+\fr{m}{2}\right),\]
so the coefficient of $\al_2$ in $\tau_2(\rho+m\phi_2)$ is $\fr{1}{2}\left((2+\fr{m}{2})-\fr{m}{2}-(1+\fr{m}{2})\right)=1-\fr{m}{2}$, which is non-positive if $m\ge2$. This implies that $C(G,V)=0$, and Borel's theorem does not allow one to conclude, for example, that $H^1(\Ga;V)=0$ for a lattice $\Ga<\SO_{3,3}(\Z)$. However, $H^1(\Ga;V)$ does vanish for any nontrivial $V$ by a theorem of Margulis \cite[Ch.\ VII, Cor.\ 6.17]{margulis}. 

The failure of Proposition \ref{prop:so-other} in the case $n=3$ is related to the fact that $\SO_{3,3}$ is isogenous to $\SL_4$. For $\SL_{n+1}$ one can compute that $C(G,\C)$ is the smallest integer strictly less than $n/2$, but it's not true the $C(G,V)\ge C(G,\C)$ for every irreducible representation. Indeed if one takes $V=\Sym^m(\C^{n+1})$, then $C(G,V)=0$ for $m$ sufficiently large. In this direction we remark that there are other known vanishing results for $H^*(\Ga;V)$. See \cite[pg.\ 143]{li-schwermer}. 

Proposition \ref{prop:so-other} gives a lower bound on $C(G,V)$. We remark on the upper bound. Observe that if $\si=\tau_1\cd\tau_n$, then $\si(\phi)\le0$ because the coefficient of $\al_1$ is non-positive. Since the coefficient of $\al_1$ in $\si(\rho)$ is also non-positive, it follows that $\si(\rho+\la)\le0$ for every highest weight $\la$. This shows that $C(G,V)\le n-1$, and so 
\[n-2\le C(G,V)\le n-1\] 
for any irreducible $V$. For any particular $V$ one can determine which inequality is strict. For example $C(G,V)=n-2$ when the highest weight of $V$ is one of the basis vectors $\phi_1,\ldots,\phi_n$  (to show $C(G,V)\le n-2$, consider $\si=\tau_1\cd\tau_{n-1}$ if $1\le k\le n-1$ and $\si=\tau_1\cd\tau_{n-2}\tau_n$ for $k=n$). We leave further computations to the reader.  

\section{Computation for $\Sp_{2n}(\R)$} \label{sec:sp}

In this section we carry out the analysis of \S\ref{sec:so} for $\Sp_{2n}$. The goal is to prove the following proposition. 

\begin{prop}\label{prop:sp-constant}Fix $n\ge3$, and let $G=\Sp_{2n}$. Then $C(G,V)= n-1$ for each irreducible finite dimensional rational representation $V$ of $G$. 
\end{prop}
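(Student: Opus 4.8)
The plan is to mirror the structure of Section \ref{sec:so}: first compute $C(G,\C)=n-1$ for the trivial representation by working with the explicit combinatorics of the Weyl group of type $C_n$ acting on $\ep_i$-coordinates, and then show $C(G,V)\ge C(G,\C)$ for every irreducible $V$ by analyzing when $\si(\phi_k)$ is dominant for the fundamental weights $\phi_k$; since we are also claiming equality rather than just a lower bound, I would separately exhibit a specific short Weyl element $\si$ with $\si(\rho+\la)\not>0$ for every $\la$, giving the upper bound $C(G,V)\le n-1$.

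First I would record the relevant data from Bourbaki for type $C_n$: the simple roots are $\al_i=\ep_i-\ep_{i+1}$ for $i<n$ and $\al_n=2\ep_n$; the Weyl group $W=(\Z/2\Z)^n\rt S_n$ is the full signed permutation group of $\{\pm\ep_1,\ldots,\pm\ep_n\}$ (note: \emph{all} sign changes are allowed here, unlike the $D_n$ case, which is exactly why the bound improves from $n-2$ to $n-1$); and $\rho$, the half-sum of positive roots, equals $(n,n-1,\ldots,2,1)$ in $\ep_i$-coordinates. The reflections $\tau_1,\ldots,\tau_n$ generating $W$ act as before, with $\tau_i$ ($i<n$) transposing $\ep_i,\ep_{i+1}$ and $\tau_n$ negating $\ep_n$; again $\si\in W^q$ iff the word length of $\si$ in $S=\{\tau_1,\ldots,\tau_n\}$ is $q$. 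The basis-change formula analogous to (\ref{eqn:basis-change}) reads $x_k=y_1+\cdots+y_k$ for $k\le n-1$ and $x_n=\tfrac12(y_1+\cdots+y_n)$, so that $\si(\rho)>0$ amounts to requiring each partial sum $y_1+\cdots+y_k$ to be positive, now for \emph{all} $k=1,\ldots,n$ with no exceptional index.

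For the lower bound $C(G,\C)\ge n-1$: given $\si$ of word length $\le n-1$, the coordinates of $\si(\rho)=(y_1,\ldots,y_n)$ are a signed permutation of $(n,n-1,\ldots,1)$. The key counting observation, exactly as in Proposition \ref{prop:so-constant-triv}, is that to make the coefficient $j$ of $\rho$ appear with a negative sign in $\si(\rho)$ requires word length at least $j$ (move $j$ rightward and apply $\tau_n$), and to flip several coefficients $j_1,\ldots,j_m$ requires length at least $j_1+\cdots+j_m$. If $\si$ has length $\le n-1$ then $j_1+\cdots+j_m\le n-1$, so every partial sum satisfies $y_1+\cdots+y_k\ge n-(j_1+\cdots+j_m)\ge n-(n-1)=1>0$; hence $\si(\rho)>0$. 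For the matching upper bound $C(G,\C)\le n-1$ I would take $\si=\tau_1\tau_2\cdots\tau_n$, of length $n$: one checks in $\ep_i$-coordinates that $\si(\rho)=(-1,n,n-1,\ldots,2)$, whose leading coordinate is negative, so the coefficient of $\al_1$ vanishes or is negative and $\si(\rho)$ is not dominant regular. This gives $C(G,\C)=n-1$, a Proposition \ref{prop:sp-constant-triv} analogue. The same element $\si=\tau_1\cdots\tau_n$ also furnishes $C(G,V)\le n-1$ for any $V$: since the leading coordinate of $\si(\rho)$ is already $-1$ and the fundamental weights $\phi_k$ for type $C_n$ are $\phi_k=\ep_1+\cdots+\ep_k$, one computes the leading coordinate of $\si(\phi_k)$ and sees the total stays non-positive, so $\si(\rho+\la)\not>0$.

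For $C(G,V)\ge C(G,\C)=n-1$: as in Proposition \ref{prop:so-other}, it suffices to show that for every fundamental weight $\phi=\phi_k=(1,\ldots,1,0,\ldots,0)$ (with $k$ ones) and every $\si\in W^q$ with $q\le n-1$, we have $\si(\phi)\ge0$, since then $\si(\rho+\la)=\si(\rho)+\sum a_k\si(\phi_k)$ is dominant regular by combining with the trivial case. The argument is the word-length bound: to make $\ell+1$ of the positive coordinates of $\phi$ negative costs at least $(n-k)+(n-k+1)+\cdots+(n-k+\ell)$ (analogue of (\ref{eqn:so-word-bound})), and then to spoil dominance one must additionally move $(k-2\ell-1)$ remaining positive entries past the $(\ell+1)$ negative ones, costing at least $(k-2\ell-1)(\ell+1)$ (analogue of (\ref{eqn:so-word-bound2})). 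Adding these and simplifying, one gets a lower bound of the form $n(\ell+1)-c(\ell)$ for an explicit quadratic $c(\ell)$; setting this $\le n-1$ and using $\ell<k/2\le (n-1)/2$ should force $n$ below the hypothesized threshold $n\ge3$, the desired contradiction. I expect the main obstacle to be bookkeeping: getting the word-length lower bounds exactly right for type $C_n$ — in particular tracking that $\tau_n$ here flips a single sign (cost $1$, cheaper than in $D_n$ where one needs $\tau_n$ composed with a transposition), which shifts the constants — and then verifying the final arithmetic inequality closes with the correct threshold $n\ge3$ rather than $n\ge4$. One should also handle the boundary index $k=n$ (where $\phi_n=\ep_1+\cdots+\ep_n$ and there is no "extra zero slot"), and confirm that, unlike the $D_n$ situation, there is no exceptional index $n-1$ requiring a separate partial-sum argument — the presence of the short root $\al_n=2\ep_n$ makes the type-$C$ case slightly cleaner once the sign-change cost is correctly accounted for.
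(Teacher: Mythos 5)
Your overall strategy is the paper's: compute $C(G,\C)=n-1$ by word-length counting on signed permutations, use $\si=\tau_1\cdots\tau_n$ for the upper bound, and reduce $C(G,V)\ge n-1$ to showing $\si(\phi_k)\ge0$ for all $\si$ of length $\le n-1$. But two steps do not go through as written. First, in the lower bound for $C(G,\C)$ you assert that for \emph{any} $\si$ of length $\le n-1$ every partial sum satisfies $y_1+\cdots+y_k\ge n-(j_1+\cdots+j_m)$. This is false: for $\si=\tau_1$ one has $\si(\rho)=(n-1,n,n-2,\ldots,1)$, no signs are flipped, and $y_1=n-1<n$. That inequality is only valid when $\tau_1$ does not occur in $\si$, so that the entry $n$ stays in the first slot. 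The paper handles a general $\si$ by noting that some $\tau_i$ is absent from the word, writing $\si=\si_1\si_2$ with $\si_1$ a word in $\{\tau_1,\ldots,\tau_{i-1}\}$ and $\si_2$ a word in $\{\tau_{i+1},\ldots,\tau_n\}$, and reducing each block of coefficients to one of the two special cases (no $\tau_n$, resp.\ no $\tau_1$); you need some version of this reduction.

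Second, and more damagingly, your word-length count for flipping signs of $\phi_k$ uses the type-$D_n$ constants and has the direction of the correction backwards. In $C_n$ the generator $\tau_n$ negates only the \emph{last} coordinate, so negating the $1$ in position $k$ costs $n-k$ transpositions to reach position $n$ \emph{plus one} application of $\tau_n$, i.e.\ $n-k+1$; in $D_n$ the swap-and-negate $\tau_n$ makes the cost $n-k$. Sign flips are therefore \emph{more} expensive in type $C$, not cheaper. With your bound $(n-k)+\cdots+(n-k+\ell)$ the total lower bound is $n(\ell+1)-\tfrac{3\ell^2+5\ell+2}{2}$, and setting this $\le n-1$ yields only $n\le\tfrac{3\ell+5}{2}$, which together with $\ell<k/2$ fails to give a contradiction for $n=3,4$ (and fails entirely when $k=n$). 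The correct count $(n-k+1)+\cdots+(n-k+1+\ell)$ gives a total of $n(\ell+1)-\tfrac{3\ell^2+3\ell}{2}$, hence $n\le\tfrac{3\ell+3}{2}-\tfrac{1}{\ell}$, which disposes of all $k\le n-1$ when $n\ge3$; but for $k=n$ one only has $\ell<n/2$ (not $\ell<(n-1)/2$ as you wrote), the inequality only forces $n<6$, and the residual cases $k=n$, $3\le n\le 5$, $\ell\in\{1,2\}$ must be excluded by hand using the sharper form of the inequality. You flag both the bookkeeping and the $k=n$ boundary as risks, but as written the arithmetic at the end of your argument does not close.
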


The outline of the argument is similar to the argument for Proposition \ref{prop:so-constant}. We explain the main differences and refer the reader to \S\ref{sec:so} when the details are similar. We start with the following information is from \cite[pg.\ 254-255]{bourbaki}. 
\begin{itemize}
\item The simple roots are $\al_1=\ep_1-\ep_2$, $\ld$, $\al_{n-1}=\ep_{n-1}-\ep_n$, and $\al_n=2\ep_n$. 
\item The half the sum of positive roots is $\rho=\sum r_i\>\al_i$, where $r_i=\fr{(2n-i+1)i}{2}$ for $1\le i\le n-1$ and $r_n=\fr{n(n+1)}{4}$. 
\item The Weyl group $W=(\Z/2\Z)^n\rt S_n$. It acts as the signed permutation group of $\{\pm\ep_1,\ld,\pm\ep_n\}$. 
\end{itemize}

Let $\tau_i\in W$ be the reflection fixing the orthogonal complement of $\al_i$. For $1\le i\le n-1$, the reflection $\tau_i$ interchanges $\ep_i$ and $\ep_{i+1}$, while $\tau_n$ only changes the sign on $\ep_n$. The set $S=\{\tau_1,\ldots,\tau_n\}$ generates $W$. As in the $\SO_{n,n}$ case, $S\sbs W^1$ and $\si\in W^q$ if and only if the $S$ word-length of $\si$ is $q$. 

We record how the bases $(\ep_1,\ldots,\ep_n)$ and $(\al_1,\ld,\al_n)$ are related: if $\sum x_i\al_i=\sum y_i\ep_i$, then 
\begin{equation}\label{eqn:sp-basis-change}
\begin{array}{rclcl}
x_k&=&y_1+\cd+y_k&k\le n-1\\[2mm]
x_n&=&\fr{1}{2}(y_1+\cd+y_{n-1}+y_n)
\end{array}
\end{equation}
In $\ep_i$-coordinates, 
\[\rho=(n,n-1,\ldots,2,1).\]

\begin{prop}\label{prop:sp-constant-triv} If $G=\Sp_{2n}$, then $C(G,\C)=n-1$.
\end{prop}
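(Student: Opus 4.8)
The plan is to mirror the two-sided argument used for $\SO_{n,n}$ in Proposition \ref{prop:so-constant-triv}, taking advantage of the fact that for $\Sp_{2n}$ the setup is actually simpler: the Weyl group is the \emph{full} signed permutation group (not the even one), $\rho=(n,n-1,\ldots,1)$ has no zero coordinate, and the only sign-changing generator $\tau_n$ flips a single coordinate at a time. First I would establish the upper bound $C(G,\C)\le n-1$ by exhibiting a single word of length $n$ in $S$ whose action makes $\rho$ fail to be dominant regular. The natural candidate is $\si=\tau_1\tau_2\cdots\tau_{n-1}\tau_n$ or, more cheaply, note that moving the entry $1$ (the $n$-th coordinate of $\rho$) to the front and then negating it — i.e.\ a word involving $\tau_n$ after pulling a small entry leftward — costs exactly $n$; alternatively $\tau_n\tau_{n-1}\cdots\tau_1(\rho)$ puts $-1$ in the first coordinate (in $\ep_i$-coordinates), so the coefficient of $\al_1$, which by (\ref{eqn:sp-basis-change}) is $y_1$, equals $-1<0$. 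That word has length $n$, so $C(G,\C)\le n-1$.

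For the lower bound $C(G,\C)\ge n-1$, I would show that if $\si$ has $S$-word-length $\ell\le n-1$, then $\si(\rho)$ is dominant regular. Since the $\tau_i$ act on $\ep$-coordinates as signed permutations, $\si(\rho)=(y_1,\ldots,y_n)$ is a signed permutation of $(n,n-1,\ldots,1)$, and by (\ref{eqn:sp-basis-change}) dominant regularity amounts to $y_1+\cdots+y_k>0$ for all $1\le k\le n$ (the last condition, $k=n$, being $\tfrac12(y_1+\cdots+y_n)>0$, which is automatic once the partial sums up to $n-1$ are controlled, or can be folded into the same estimate). The key combinatorial observation, exactly as in the $\SO_{n,n}$ argument, is a cost estimate: to make the coefficient $j$ of $\rho$ (the entry of value $j$) appear with a negative sign in $\si(\rho)$, one must first transport it to the last coordinate and then apply $\tau_n$, which requires word-length at least $j$; and to make a set of coefficients $j_1,\ldots,j_m$ simultaneously negative costs at least $j_1+\cdots+j_m$, since each $\tau_i$ shifts one coordinate one step and the sign changes must be applied in sequence at position $n$. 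Therefore, if $j_1,\ldots,j_m$ are the values that become negative, then $j_1+\cdots+j_m\le\ell\le n-1$. Since the positive values among $y_1,\ldots,y_n$ include $n$ itself (value $n$ cannot be made negative with a word of length $\le n-1$), for every $k$ we get
\[
y_1+\cdots+y_k\ \ge\ n-(j_1+\cdots+j_m)\ \ge\ n-(n-1)\ =\ 1\ >\ 0,
\]
using that the one guaranteed-positive large entry survives and that the negative contributions total at most $n-1$. Hence $\si(\rho)$ is dominant regular.

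A couple of points will need care, and I expect the precise bookkeeping of the cost estimate to be the only real obstacle. One must justify that after transporting $n$ to the front the partial sums really are bounded below by $n$ minus the total negative mass — here one should argue (as in Proposition \ref{prop:so-constant-triv}) that by replacing $\si$ with a shorter word one may assume the coordinates of $\si(\rho)$ are arranged in decreasing order, so the worst case for a partial sum $y_1+\cdots+y_k$ is when all negatives are included; this makes the estimate $y_1+\cdots+y_k\ge n-(n-1)>0$ rigorous. One should also double-check the boundary index $k=n$ separately using the $\tfrac12$ in (\ref{eqn:sp-basis-change}), though since every $y_i$ with $|y_i|$ large and positive dominates, this case causes no trouble. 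Combining the upper and lower bounds gives $C(G,\C)=n-1$, completing the proof; I would remark that the argument is in fact cleaner than the $D_n$ case because there is no parity constraint on the sign changes and no vanishing coordinate in $\rho$ to treat as an exception.
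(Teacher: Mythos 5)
Your upper bound is fine and agrees with the paper: the Coxeter element $\tau_1\cd\tau_n$ has length $n$ and sends $\rho=(n,n-1,\ld,1)$ to $(-1,n,\ld,2)$, whose $\al_1$-coefficient is $-1<0$, so $C(G,\C)\le n-1$. The gap is in the lower bound. Your central inequality
\[y_1+\cd+y_k\ \ge\ n-(j_1+\cd+j_m)\]
is justified by the remark that the entry $n$ "survives positive," but surviving positive somewhere in $(y_1,\ld,y_n)$ is not enough: the partial sum $y_1+\cd+y_k$ only sees the first $k$ coordinates, and nothing in your argument prevents the entry $n$ from having been pushed past position $k$. Already for $\si=\tau_1$ and $k=1$ one has $y_1=n-1$, not $\ge n$, so the displayed inequality is false as stated; for longer words the positive entries occupying positions $1,\ld,k$ could in principle be small ones, and then the estimate does not close. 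Your proposed repair --- replace $\si$ by a shorter word so that the coordinates of $\si(\rho)$ are in decreasing order --- does not rescue it: fully sorting a vector such as $(1,-2,5,\ld)$ destroys the negativity of the partial sum you are trying to contradict (indeed, fully sorting with sign corrections just returns you to $\rho$ itself). Sorting only the first $k$ coordinates among themselves does preserve $y_1+\cd+y_k$ and can be done while shortening $\si$, but you are then still left with the real question of which positive values can sit in those first $k$ slots, and that is exactly the work your write-up skips.

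The paper closes this gap with a case decomposition that your proposal omits. Since $\ell(\si)\le n-1$, some generator $\tau_i$ is absent from a reduced word for $\si$, so $\si=\si_1\si_2$ with $\si_1$ a word in $\{\tau_1,\ld,\tau_{i-1}\}$ and $\si_2$ a word in $\{\tau_{i+1},\ld,\tau_n\}$; these act on disjoint sets of coordinates, and the coefficient of $\al_j$ in $\si(\rho)$ agrees with that of $\si_1(\rho)$ for $j\le i$ and with that of $\si_2(\rho)$ for $j\ge i+1$. For $\si_1$ there are no sign changes at all, so positivity is immediate; for $\si_2$ the generator $\tau_1$ is absent, so the first coordinate of $\si_2(\rho)$ is still $n$, it belongs to \emph{every} partial sum, and only there does your estimate $y_1+\cd+y_k\ge n-(j_1+\cd+j_m)>0$ become valid. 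You need to restore this decomposition (or give a genuinely different argument controlling the positive entries among the first $k$ coordinates) before the inequality $C(G,\C)\ge n-1$ can be considered proved.
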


\begin{proof} 
First observe that if $\si=\tau_1\cd\tau_n$, then $\si(\rho)=(-1,n,\ldots,2)$ is not dominant regular. This shows $C(G,\C)\le n-1$. 

To show $C(G,\C)\ge n-1$, let $\si$ be a word in $S$ of length $\le n-1$. We will show $\si(\rho)$ is dominant regular. 

{\it Special case $1$.} First consider the case that $\si$ is a word in $S\setminus\{\tau_n\}$. Since $\tau_1,\ldots,\tau_{n-1}$ act as permutations without changing sign, $\si(\rho)=(y_1,\ldots,y_{n-1},1)$, where $(y_1,\ldots,y_{n-1})$ are a permutation of $(n,\ldots,2)$. Then $y_1+\cd+y_i>0$ for each $1\le i\le n$, which implies that $\si(\rho)$ is dominant regular. 

{\it Special case $2$.} Next consider the case that $\si$ is a word in $S\setminus\{\tau_1\}$. Then $\si(\rho)=(n,y_2,\ldots,y_n)$, where $(y_2,\ldots,y_n)$ is a signed permutation of $(n-1,\ldots,1)$. 

Since $\tau_n$ is the only element of $S$ that changes any sign, in order for $j$ (the $(n-j+1)$-st coordinate of $\rho$) to appear with a negative sign in $\si(\rho)$, the length of $\si$ must be at least $j$. Similarly, in order for each of the coefficients $j_1,\ldots,j_m$ of $\rho$ to appear with negative signs in $\si(\rho)$, the length of $\si$ must be at least $j_1+\cdots+j_m$. Let $j_1,\ldots,j_m$ be the coefficients of $\rho$ that become negative in $\si(\rho)$. Then $j_1+\cdots+j_m\le n-1$ because $\si$ has length $\le n-1$. Hence for $1\le i\le n$,
\[y_1+\cd+y_i\ge n-(j_1+\cdots+j_m)\ge n-(n-1)>0.\]
This shows that $\si(\rho)$ is dominant regular. 

{\it General case.} If $\si$ has length $\le n-1$, then there is some index $1\le i\le n$ so that $\tau_i$ is not in $\si$. We covered the cases $i=1$ and $i=n$ above, so we can assume $1<i<n$. Then we can write $\si=\si_1\si_2$, where $\si_1$ is a word in $\{\tau_1,\ld,\tau_{i-1}\}$ and $\si_2$ is a word in $\{\tau_{i+1},\ld,\tau_n\}$. Then the coefficients of $\al_j$ in $\si_1(\rho)$ and $\si(\rho)$ agree for $j\le i$ and the coefficients of $\al_j$ in $\si_2(\rho)$ and $\si(\rho)$ agree for $j\ge i+1$, so we again reduce to the previous cases to conclude that $\si(\rho)$ is dominant regular. 
\end{proof}

\begin{prop}
Let $V$ be an irreducible representation. Then $C(G,V)= C(G,\C)$. 
\end{prop}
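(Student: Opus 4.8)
The plan is to establish the inequalities $C(G,V)\ge n-1$ and $C(G,V)\le n-1$ separately and combine them with Proposition~\ref{prop:sp-constant-triv}. Write $\la$ for the highest weight of $V$. The structural simplification over the $\SO_{n,n}$ case is that for type $C_n$ every fundamental weight is of the ``tensor'' form $\phi_k=\ep_1+\cd+\ep_k$ for $1\le k\le n$ (see \cite[\S17.2]{fh}) --- there are no spinor weights --- so $\la=\sum_{k=1}^na_k\phi_k$ with all $a_k\ge0$, and only a single case needs to be analyzed.

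For the lower bound I would follow the proof of Proposition~\ref{prop:so-other} almost verbatim. It suffices to show $\si(\phi_k)\ge0$ whenever $\si\in W^q$ with $q\le n-1$; then $\si(\rho+\la)=\si(\rho)+\sum_ka_k\,\si(\phi_k)>0$, since $\si(\rho)>0$ by Proposition~\ref{prop:sp-constant-triv}, whence $C(G,V)\ge n-1=C(G,\C)$. To prove $\si(\phi_k)\ge0$, work in $\ep_i$-coordinates, where by \eqref{eqn:sp-basis-change} this amounts to all partial sums $y_1+\cd+y_m$ ($1\le m\le n$) of $\si(\phi_k)=(y_1,\ld,y_n)$ being $\ge0$. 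Assuming not, let $m$ be minimal with $y_1+\cd+y_m<0$; as in Section~\ref{sec:so}, after replacing $\si$ by a shorter word one may assume the prefix $(y_1,\ld,y_m)$ is sorted into $\ell$ ones, some zeros, and $\ell+1$ minus-ones, with $2\ell+1\le k$, hence $\ell<k/2\le n/2$. One then bounds $\ell(\si)$ below by the cost of (a) producing the $\ell+1$ sign changes --- each requiring transport of a coordinate to the last slot followed by one application of $\tau_n$, since in type $C_n$ (unlike $D_n$) the reflection $\tau_n$ negates only one coordinate --- together with (b) moving the remaining $k-2\ell-1$ positive entries past the $\ell+1$ negative ones. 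The analogues of \eqref{eqn:so-word-bound} and \eqref{eqn:so-word-bound2} combine to give $\ell(\si)\ge(\ell+1)(n-\tfrac{3\ell}{2})$; this equals $n$ when $\ell=0$, and for $\ell\ge1$ it reduces to $\ell\le(2n-3)/3$, which holds because $2\ell+1\le n$ and $n\ge3$. So $\ell(\si)\ge n$, contradicting $\ell(\si)\le n-1$, and therefore $\si(\phi_k)\ge0$ and $C(G,V)\ge n-1$.

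For the upper bound I would use $\si=\tau_1\cd\tau_n\in W^n$, which in $\ep_i$-coordinates acts by $(y_1,\ld,y_n)\mapsto(-y_n,y_1,\ld,y_{n-1})$. The coefficient of $\al_1$ in $\si(\rho)$ equals $-1$, and the coefficient of $\al_1$ in $\si(\phi_k)$ equals $0$ for $k<n$ and $-1$ for $k=n$; hence for every highest weight $\la=\sum_ka_k\phi_k$ the coefficient of $\al_1$ in $\si(\rho+\la)$ is $-1-a_n<0$, so $\si(\rho+\la)$ is not dominant regular and $C(G,V)\le n-1$. Combined with the lower bound and Proposition~\ref{prop:sp-constant-triv}, this gives $C(G,V)=n-1=C(G,\C)$.

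The step I expect to be the main obstacle is part (a) of the lower-bound estimate: one must argue carefully, exactly as in Section~\ref{sec:so}, that no word in the generators $\tau_i$ of length less than $(\ell+1)(n-\tfrac{3\ell}{2})$ produces the required sign pattern, and one must justify the reduction to a sorted prefix by passing to a shorter word. Once that estimate is in hand, the rest is bookkeeping parallel to the $\SO_{n,n}$ computation, and the numerics match the standing hypothesis $n\ge3$.
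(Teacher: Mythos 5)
Your proof is correct and follows essentially the same route as the paper: the same upper-bound witness $\si=\tau_1\cdots\tau_n$, the same reduction to showing $\si(\phi_k)\ge0$ for $\si$ of length $\le n-1$, and the same two word-length estimates, whose sum $(\ell+1)\bigl(n-\tfrac{3\ell}{2}\bigr)$ is exactly the paper's $n(\ell+1)-\tfrac{3\ell^2+3\ell}{2}$. Your endgame is in fact marginally cleaner: by using the integer bound $2\ell+1\le k\le n$ you get the contradiction (length $\ge n$) uniformly for $n\ge3$, whereas the paper's looser real bound $\ell<n/2$ forces a separate case analysis for $k=n$ and $n\le5$.
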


\begin{proof}
Let $\la$ be the highest weight of $V$. According to \cite[\S17.2]{fh}, $\la$ can be expressed as an integral linear combination $\la=\sum_{k=1}^n a_k\>\phi_k$, where $a_k\ge0$ and $\phi_k=\ep_1+\cd+\ep_k$. If $\si\in W$, then $\si(\rho+\la)=\si(\rho)+\sum_k a_k\>\si(\phi_k)$. The proof of the proposition will follow by studying $\si(\phi_k)$.

First we explain why $C(G,V)\le C(G,\C)=n-1$. Observe that for $\si=\tau_1\cd\tau_n$, the coefficient of $\al_1$ in $\si(\phi_k)$ is 0 for each $1\le k\le n$. In Proposition \ref{prop:sp-constant-triv} we showed that the coefficient of $\al_1$ in $\si(\rho)$ is negative, so it follows that $\si(\rho+\la)\le0$. Hence $C(G,V)\le n-1$.

To show that $C(G,V)\ge n-1$, it suffices to show that if $\si\in W^q$ for $q\le n-1$, then $\si(\phi_k)\ge0$ for each $1\le k\le n$. To simplify the notation, fix $k$ and write $\phi=\phi_k$. In $\ep_i$-coordinates $\phi=(1,\ldots,1,0,\ldots,0)$. Next we bound from below the minimum word length of $\si$ needed for $\si(\phi)<0$, and we will find that there is no $\si$ of length $\le n-1$. 

First observe that the only way to act by elements of $S$ to make a coefficient of $\phi$ negative is to move that coefficient to the right (using a word like $\tau_{n-1}\cd\tau_i$) and the apply $\tau_n$. Therefore, fixing $\ell<k/2$, any word $\si$ such that $\si(\phi)$ has $\ell+1$ negative coordinates has length at least 
\begin{equation}\label{eqn:sp-word-bound}
(n-k+1)+\cdots+(n-k+1+\ell)=n(\ell+1)-\left[\fr{k(k-1)}{2}-\fr{(k-\ell-1)(k-\ell-2)}{2}\right]. 
\end{equation} 
After creating $\ell+1$ negative coefficients, to make a non-dominant vector, one needs to move sufficiently many positive entries to the right, passed the negative entries. Since we start with $k=\ell+(k-2\ell+1)+(\ell+1)$ positive entries, we must move $(k-2\ell-1)$ positive entries passed the $(\ell+1)$ negative entries. This requires a word of length at least 
\begin{equation}\label{eqn:sp-word-bound2}
(k-2\ell-1)(\ell+1).
\end{equation} 

Now we conclude. Suppose for a contradiction that $\si$ has length $\le n-1$ and that $\si(\phi)<0$. Write $\si(\phi)=(y_1,\ldots,y_n)$, and let $i$ be the smallest index so that the coefficient of $\al_i$ in $\si(\phi)$ is negative. Then $y_1+\cdots+y_i<0$. The terms in the sum $y_1+\cd+y_i$ are all $+1,0,-1$. By replacing $\si$ with a shorter word, we can assume that the summands occur in decreasing order. By minimality of our choice of $i$, if there are $\ell$ positive terms in the sum, then there are $\ell+1$ negative terms. Then $2\ell+1\le k$, so $\ell<k/2$. 

Combining (\ref{eqn:sp-word-bound}) and (\ref{eqn:sp-word-bound2}), if the leading coefficients of $\si(\phi)$ are $(1,\ldots,1,0,\ldots,0,-1,\ldots,-1,\ldots)$, then the length of $\si$ is at least 
\[n(\ell+1)-\left[\fr{k(k-1)}{2}-\fr{(k-\ell-1)(k-\ell-2)}{2}\right]+(k-2\ell-1)(\ell+1)=n(\ell+1)-\fr{3\ell^2+3\ell}{2}.\]

Since we're assuming $\si$ has length $\le n-1$, we must have $n(\ell+1)-\fr{3\ell^2+3\ell}{2}\le n-1$. This inequality implies that 
\[n\le \fr{3\ell+3}{2}-\fr{1}{\ell}\le\fr{3\ell+3}{2}.\]
Since $\ell<k/2$, if $k\le n-1$, this implies that $n<3$, which contradicts the hypothesis. If $k=n$, then we can only conclude $n< 6$. 

Assume now that $k=n$ and $n\le 5$. Since $\ell<k/2=n/2$ this implies that either $\ell=1$ and $3\le n\le 5$ or $\ell=2$ and $n= 5$. The inequality $n\le \fr{3\ell+3}{2}-\fr{1}{\ell}$ implies that $n\le 2$ when $\ell=1$ and it implies $n\le 4$ when $\ell=2$. In either case, this is a contradiction. Therefore, we conclude that if $\si$ has length $\le n-1$, then $\si(\phi)\ge0$. This completes the proof. 
\end{proof}

\bibliographystyle{alpha}
\bibliography{borel.bib}

\begin{thebibliography}{KRW19}

\bibitem[Bor74]{borel_cohoarith}
A.~Borel.
\newblock Stable real cohomology of arithmetic groups.
\newblock {\em Ann. Sci. \'Ecole Norm. Sup. (4)}, 7:235--272 (1975), 1974.

\bibitem[Bor81]{borel_cohoarith2}
A.~Borel.
\newblock Stable real cohomology of arithmetic groups. {II}.
\newblock In {\em Manifolds and {L}ie groups ({N}otre {D}ame, {I}nd., 1980)},
  volume~14 of {\em Progr. Math.}, pages 21--55. Birkh\"auser, Boston, Mass.,
  1981.

\bibitem[Bou68]{bourbaki}
N.~Bourbaki.
\newblock {\em \'El\'ements de math\'ematique. {F}asc. {XXXIV}. {G}roupes et
  alg\`ebres de {L}ie. {C}hapitre {IV}: {G}roupes de {C}oxeter et syst\`emes de
  {T}its. {C}hapitre {V}: {G}roupes engendr\'es par des r\'eflexions.
  {C}hapitre {VI}: syst\`emes de racines}.
\newblock Actualit\'es Scientifiques et Industrielles, No. 1337. Hermann,
  Paris, 1968.

\bibitem[BW00]{borel-wallach}
A.~Borel and N.~Wallach.
\newblock {\em Continuous cohomology, discrete subgroups, and representations
  of reductive groups}, volume~67 of {\em Mathematical Surveys and Monographs}.
\newblock American Mathematical Society, Providence, RI, second edition, 2000.

\bibitem[ERW15]{ebert-rw}
J.~Ebert and O.~Randal-Williams.
\newblock Torelli spaces of high-dimensional manifolds.
\newblock {\em J. Topol.}, 8(1):38--64, 2015.

\bibitem[FH91]{fh}
W.~Fulton and J.~Harris.
\newblock {\em Representation theory}, volume 129 of {\em Graduate Texts in
  Mathematics}.
\newblock Springer-Verlag, New York, 1991.
\newblock A first course, Readings in Mathematics.

\bibitem[GH68]{garland-hsiang}
H.~Garland and W.-C. Hsiang.
\newblock A square integrability criterion for the cohomology of arithmetic
  groups.
\newblock {\em Proc. Nat. Acad. Sci. U.S.A.}, 59:354--360, 1968.

\bibitem[Hai97]{hain-infinitesimal}
R.~Hain.
\newblock Infinitesimal presentations of the {T}orelli groups.
\newblock {\em J. Amer. Math. Soc.}, 10(3):597--651, 1997.

\bibitem[Hum78]{humphreys-intro-lie}
J.~E. Humphreys.
\newblock {\em Introduction to {L}ie algebras and representation theory},
  volume~9 of {\em Graduate Texts in Mathematics}.
\newblock Springer-Verlag, New York-Berlin, 1978.
\newblock Second printing, revised.

\bibitem[KRW19]{kupers-rw}
S.~Kupers and O.~Randal-Williams.
\newblock {On the cohomology of Torelli groups}.
\newblock arXiv:1901.01862, January 2019.

\bibitem[LS04]{li-schwermer}
J.-S. Li and J.~Schwermer.
\newblock On the {E}isenstein cohomology of arithmetic groups.
\newblock {\em Duke Math. J.}, 123(1):141--169, 2004.

\bibitem[Mar91]{margulis}
G.~A. Margulis.
\newblock {\em Discrete subgroups of semisimple {L}ie groups}, volume~17 of
  {\em Ergebnisse der Mathematik und ihrer Grenzgebiete (3) [Results in
  Mathematics and Related Areas (3)]}.
\newblock Springer-Verlag, Berlin, 1991.

\bibitem[Tsh17]{tshishiku-geocycles}
B.~Tshishiku.
\newblock {Geometric cycles and characteristic classes of manifold bundles}.
\newblock arXiv:1711.03139, November 2017.

\bibitem[Zuc83]{zucker}
S.~Zucker.
\newblock {$L_{2}$}\ cohomology of warped products and arithmetic groups.
\newblock {\em Invent. Math.}, 70(2):169--218, 1982/83.

\end{thebibliography}

\end{document}